\newtheorem{theorem}{Theorem}[section]
\newtheorem{problem}[theorem]{Problem}
\newtheorem{proposition}[theorem]{Proposition}
\newtheorem{definition}[theorem]{Definition}
\newtheorem{example}[theorem]{Example}
\newtheorem{remark}[theorem]{Remark}
\numberwithin{equation}{section}
\renewcommand{\S}{{\mathbb{S}}}
\newcommand{\R}{{\mathbb{R}}}
\newcommand{\N}{{\mathbb{N}}}
\renewcommand{\O}{\mathcal{O}}
\newcommand{\M}{\mathcal{M}}
\renewcommand{\tilde}[1]{\widetilde{#1}}
\newcommand{\xhat}{\hat{x}}
\newcommand{\ahat}{\hat{E}}
\newcommand{\zhat}{\hat{z}}
\newcommand{\half}{\frac{1}{2}}
\newcommand{\norm}[1]{\left\Vert#1\right\Vert_2}
\renewcommand{\matrix}[1]{\begin{bmatrix}#1\end{bmatrix}}
\title{\LARGE \bf 
Event-Triggered State Observers for \\Sparse Sensor Noise/Attacks
}
\author{Yasser Shoukry and Paulo Tabuada
\thanks{Y. Shoukry and P. Tabuada are with the UCLA Electrical
Engineering Department, {\tt\small yshoukry@ucla.edu, tabuada@ee.ucla.edu}}%
\thanks{This work was partially sponsored by the NSF award 1136174 and
by DARPA under agreement number FA8750-12-2-0247. The U.S. Government
is authorized to reproduce and distribute reprints for Governmental purposes
notwithstanding any copyright notation thereon. The views and conclusions
contained herein are those of the authors and should not be interpreted
as necessarily representing the official policies or endorsements, either
expressed or implied, of NSF, DARPA or the U.S. Government.}
}
\begin{document}

\maketitle

\begin{abstract}
This paper describes two algorithms for state reconstruction from sensor 
measurements that are corrupted with sparse, but otherwise arbitrary, ``noise''.
These results are motivated by the need to secure cyber-physical systems against a
 malicious adversary that can arbitrarily corrupt sensor measurements.
The first algorithm reconstructs the state from a batch of sensor measurements
while the second algorithm is able to incorporate new measurements as they 
become available, in the spirit of a Luenberger observer. A distinguishing point
of these algorithms is the use of event-triggered techniques to improve the computational performance of the proposed algorithms.
\end{abstract}

\section{Introduction}

The security of Cyber-Physical Systems (CPSs) has recently become a topic of scientific inquiry in no small part due to the discovery of the Stuxnet malware, the most famous example of an attack on process control systems~\cite{stuxnet}.  Although one might be tempted to associate CPS security with large-scale and critical infrastructure, such as the power-grid and water distribution systems, previous work by the authors and co-workers has shown that even smaller systems, such as cars, can be attacked. It was shown in~\cite{YasserABS} how to attack the velocity measurements of anti-lock braking systems so as to force drivers to loose control of their vehicles.

In this paper we propose two state observers for discrete-time linear systems in
the presence of sparsely corrupted measurements. Sparse ``noise'' is
a natural model to describe the effect of a malicious attacker that has the
ability to alter the measurements of a subset of sensors in a feedback control
loop. While measurements originating from un-attacked sensors are ``noise'' free,
measurements from attacked sensors can be arbitrary: we make no assumption
regarding its magnitude, statistical description, or temporal evolution. Hence, the noise vector is sparse; its elements are either zero or arbitrary real numbers.

Several results on state reconstruction under sensor attacks have recently 
appeared in the literature. We classify the existing work in
two classes based on how the physical plant is modeled: 1)
steady-state operation (no dynamics) and 2) linear time-invariant dynamics.
In both classes the attacker is assumed to corrupt a few sensor measurements and thus its actions are adequately modeled as sparse noise.

The results reported in \cite{KalleGrid,LiuPowerAttack,SandbergPowerAttack,KosutPowerAttack,KimPowerAttack} fall in the first class -- steady state operation -- and address security problems in the context of smart power grid systems. Due to the steady state assumption, all of these results fail to exploit the constraints imposed by the continuous dynamics as done in this paper.

Representative work in the second class -- linear time-invariant dynamics -- includes~\cite{Bullo_TAC,Hamzaarxiv,HamzaAllerton,HamzaCDC}. The work reported in~\cite{Bullo_TAC} addresses the detection of attacks through monitors inspired by the fault detection and isolation literature. Such methods are better suited for small systems since the number of monitors grows combinatorially with the number of sensors. The work reported in~\cite{Hamzaarxiv,HamzaAllerton,HamzaCDC} draws inspiration from error correction over the reals \cite{DecodingOverReals} and compressive sensing \cite{CompressiveSensingBook} and formulates the secure state reconstruction problem as a $L_r \backslash L_1, r>1$ optimization problem.

The problem of reconstructing the state under sensor attacks is closely related to fault-tolerant state reconstruction. The robust Kalman filter, described in~\cite{BoydKF}, is the approach to fault-tolerant state reconstruction closer to the results in this paper, at the technical level. In robust Kalman filtering the state estimate updates are obtained by solving a convex $L_1$ optimization problem that is robust to outliers. With the advances in the computational power of current processors, the robust Kalman filter can be efficiently computed in real-time. However, no theoretical guarantees are known regarding the performance of this filter in the presence of malicious attacks.

In this paper, we extend the work in
\cite{Hamzaarxiv,HamzaAllerton,HamzaCDC} by focusing on \emph{efficient} algorithms in the sense of being implementable on computationally limited platforms. Rather than relying on classical algorithms for $L_r \backslash L_1, r>1$ optimization, as was done in~\cite{Hamzaarxiv,HamzaAllerton,HamzaCDC}, we develop in this paper customized gradient-descent algorithms which have lightweight implementations. The computational efficiency claims are supported by numerical
simulations showing an order of magnitude decrease in the computation time. 

The proposed algorithms  reconstruct both the state as well as the sparse
noise/attack signal. Hence, they can  be seen as an extension of compressive
sensing techniques to the case where part of the signal to be reconstructed is
sparse and the other part is governed by linear dynamics. A similar problem is
studied in \cite{sparseStreaming} where the recovery of a sparse streaming signal with linear dynamics is discussed. Although the work in~\cite{sparseStreaming} also exploits the linear dynamics, it is not applicable to the state reconstruction problem where the sparse signal models an attack for which no dynamics is known.

Technically, we make the following contributions:
\begin{itemize}
\item The reconstruction or decoding of compressively sensed signals is
characterized by properties such as the \emph{restricted isometry} or the
\emph{restricted eigenvalues} \cite{CompressiveSensingBook}. We show that the relevant notion in our case is a strong notion of observability.
\item We extend one of the algorithms previously proposed for the reconstruction or decoding 
of compressively sensed signals \cite{AcceleratedIHT} to the case where part of the signal is sparse while the rest is governed by linear dynamics.
\item We propose a recursive implementation of the method discussed in the previous bullet so
that new measurement information can be used as it becomes available, in the spirit of a Luenberger observer.
\end{itemize}

The rest of this paper is organized as follows. Section \ref{sec:problem}
formally introduces  the problem under consideration. The notions of
$s$-observability and $s$-restricted eigenvalues are introduced in Section
\ref{sec:observ}. The main results of this paper which are the Event-Triggered
Projected Gradient-Descent algorithm and the Event-Triggered Projected 
Luenberger Observer, along with their convergence properties, are presented in Sections \ref{sec:tgd} and \ref{sec:luenberger}, respectively. 
Simulation results for the proposed algorithms are shown in
Section \ref{sec:results}. Finally, Section \ref{sec:conclusion} concludes this
paper.

\section{The Secure State Reconstruction Problem}
\label{sec:problem}

\subsection{Notation}
The symbols $\N_0, \R,$ and $\R^+$ denote the set of natural, real, and positive real numbers, respectively. Given two vectors $x \in \R^{n_1}$ and $y\in \R^{n_2}$, we denote by $(x,y) \in \R^{n_1 + n_2}$ the vector $\matrix{x^T & y^T}^T$. We also use the notation $z_x$ and $z_y$ to denote the natural projection of the vector $z = (x,y)$ on its first and second component, respectively.

If $S$ is a set, we denote by $|S|$ the cardinality of $S$. The support of a vector $x\in \R^n$, 
denoted by $\textrm{supp}(x)$, is the set of indices of the non-zero elements of $x$.
We call a vector $x \in \R^n$ $s$-sparse, if  $x$ has at most $s$ nonzero
elements, i.e., if  $\vert \textrm{supp}(x)\vert \le s$. A vector $z = (x_1, x_2, \hdots, x_p) \in \R^{np}$ 
is called cyclic $s$-sparse, if each
$x_i \in \R^{n}$ is $s$-sparse for all $i \in \{1,2, \hdots ,p \}$ and
$\textrm{supp}(x_1) = \textrm{supp}(x_2) = \hdots = \textrm{supp}(x_p)$. With some abuse of notation,
we use $s$-sparse to denote cyclic $s$-sparse.

For a vector $x \in \R^n$, we denote by $\norm{x}$ the $2$-norm of $x$ and by $\norm{M}$ the induced $2$-norm of a matrix $M \in \R^{m \times n}$. We also denote by $M_i \in \R^{1\times n}$ the $i${th}
row of $M$. For a set $\Gamma \subseteq \{1, \hdots, m\}$, we denote by
$M_{\Gamma} \in \R^{\vert \Gamma\vert \times n}$ the matrix obtained from $M$ by  removing all the rows except those
indexed by $\Gamma$. We also denote by $M_{\overline{\Gamma}} \in
\R^{(m-|\Gamma|) \times n}$ the matrix obtained from $M$ by removing the rows indexed by
the set $\Gamma$, for example, if $m = 4$, and $\Gamma = \{1,2\}$, then:
$$M_\Gamma =\matrix{M_1 \\ M_2} \mbox{ and } M_{\overline{\Gamma}} =
\matrix{M_3\\M_4}.$$

For any finite set $S=\{s_1,s_2,\hdots,s_k\}\subset \N$ we denote by $rS$ and $r+S$, $r\in \N$, the sets $rS=\{rs_1,rs_2,\hdots,rs_k\}$ and $r + S = \{r + s_1, r+s_2, \hdots r + s_k \}$, respectively. Finally we denote the set of eigenvalues, the minimum eigenvalue, and the
maximum eigenvalue of a symmetric matrix $M$ by $\lambda{\{M\}}$, $\lambda_{\min}\{M\}$,
and $\lambda_{\max}\{M\}$ respectively.

\subsection{Dynamics and Attack Model}
Consider the following linear discrete-time control system where
$x(t)\in \R^n$ is the system state at time $t \in \N_0$, $u(t) \in
\R^m$ is the system input, and $y(t) \in \R^p$ is the observed measurement:
\begin{align}
	x(t+1) &= A x(t) + B u(t), \label{eq:sys_state} \\
	y(t) &= C x(t) + a(t).	\label{eq:sys_out}	
\end{align}
The matrices $A, B$, and $C$ have appropriate dimensions and
$a(t) \in \R^p$ is a $s$-sparse vector modeling how an attacker changed the sensor 
measurements at time $t$. If sensor $i \in \{1, \hdots ,p\}$ is attacked then
the $i${th} element in the vector $a(t)$ is non-zero otherwise the $i${th}
sensor is not attacked. Hence, $s$ describes the number of attacked sensors. 
Note that we make no assumptions on the vector $a(t)$ other than being $s$-sparse. 
In particular, we do not assume bounds, statistical properties, nor restrictions on 
the time evolution of the elements in $a(t)$. The value of $s$ is also not
assumed to be known although we assume the knowledge of an upper bound.
The set of sensors the attacker has access to is assumed to remain constant over time (and has cardinality at most $s$). However, the attacker has complete freedom in deciding which sensor or sensors in this set are attacked and when, including the possibility of attacking all of them at all times.

Our objective is to simultaneously construct a delayed version of the state,
$x(t-\tau+1)$, and the \emph{cyclic $s$-sparse} attack vector $E(t) = (a(t-\tau+1), a(t-\tau+2),
\hdots, a(t))$ from the measurements $y(t-\tau+1), y(t-\tau+2), \hdots, y(t)$.

It is worth explaining the cyclic sparse nature of $E(t)$ in more detail.
Consider the attack vector $E(t) = (a(t-\tau+1),
a(t-\tau+2), \hdots, a(t))$ and reformat this data as the matrix $\tilde{E}(t) \in \R^{p\times \tau}$ where column $i$ is given by $a(t-i+1)$.
Since we assume that the set of sensors under attack does not change over
time, the sparsity pattern appears in $\tilde{E}(t)$. The rows corresponding to
the un-attacked sensors will only have zeros, while the rows
corresponding to the attacked sensors will have arbitrary (zero or non-zero) elements.
\begin{example}
Consider a system with $4$ sensors, $\tau=4$,
and an attack on the second and third sensors. The attack
matrix  $\tilde{E}(t)$ will be of the form:
\begin{align*}
	\tilde{E}(t) = \matrix{0 & 0 & 0 & 0 \\ 2 & 5 & 6 & 10 \\ 4 & 8 & 0 & 12 \\ 0 & 0 & 0 & 0}.
\end{align*}
The cyclic sparsity structure appears once the attack matrix $\tilde{E}(t)$ is
reshaped as the vector $E(t)$:
\begin{align*}
	E(t) = \matrix{0 & 2 & 4 & 0 & 0 & 5 & 8  & 0 & \hdots }^T.
\end{align*}
\label{ex:example1}
\end{example}
Since cyclic $s$-sparse vectors pervade this paper, it is convenient to denote them by a special symbol.

\begin{definition}[\textbf{Cyclic $s$-sparse set $\S_{s}$}]
The subset of $\R^{p\tau}$ consisting of the vectors that are cyclic $r$-sparse for all $r\in \{0,1,\hdots,s\}$, $s\le p$, is denoted by $\S_{s}$.
\end{definition}

Using this cyclic sparsity notion, we pose two problems which will lead to
the two proposed algorithms.

\subsection{Static Batch Optimization}
By collecting $\tau\in \N$ observations with $\tau \le n$, we can write the output equation as:
\begin{align*}
	\tilde{Y}(t)  &= \O x(t-\tau+1) + E(t) + F U(t) \\
		&= \matrix{\O & I} \matrix{x(t-\tau+1) \\ E(t)} + F U(t)\\
		&= Q z(t) + F U(t),
\end{align*}
where:
\begin{align}	
z(t) &= \matrix{x(t-\tau+1) \\ a(t-\tau+1) \\ a(t-\tau+2) \\ \vdots \\ a(t)}
	= \matrix{x(t-\tau+1) \\ E(t)},
	E(t) = \matrix{ a(t-\tau+1) \\ a(t-\tau+2) \\ \vdots \\ a(t) }, \nonumber\\
	\O &= \matrix{ C \\ CA \\ \vdots \\ CA^{\tau-1} },	Q = \matrix{ \O & I }, \label{eq:defQ}\\
	F &= \matrix{ 0 & 0 & \hdots & 0 & 0\\ CB & 0 & \hdots & 0 & 0 \\ \vdots &
	& \ddots &  & 
	\vdots
	\\
	CA^{\tau-2}B & CA^{\tau-3}B & \hdots & CB & 0 } 
	,\tilde{Y}(t) = \matrix{ y(t-\tau+1) \\ y(t-\tau+2) \\ \vdots \\ y(t) },
	U(t) = \matrix{ u(t-\tau+1) \\ u(t-\tau+2) \\ \vdots \\ u(t) }. \nonumber
\end{align} 
Since all the inputs in the vector $U(t)$ are known, we can further simplify
the output equation to:
$$Y(t)=Qz(t),$$
where $Y(t)=\tilde{Y}(t)-FU(t)$.

\begin{problem}{\textbf{(Static Batch Optimization)}}
For the linear control system defined by~\eqref{eq:sys_state}
and \eqref{eq:sys_out} construct the estimate $\hat{z}=(\hat{x},\hat{E})$, where
$\hat{x}\in \R^n$ is the state estimate and $\hat{E}\in \S_s$ is the attack
vector estimate, obtained as the solution of the following optimization problem:
\begin{align*}
\arg\min_{\hat{z}\in \R^n\times \S_s} \half \left\Vert
Y-Q\hat{z}\right\Vert_2^2.
\end{align*}
\label{prob:opt}
\end{problem}
We dropped the time $t$ argument since 
this optimization problem is to be solved at every time instance. We note that we seek a solution in the non-convex set $\R^n\times\mathbb{S}_s$ and no closed-form solution is known for this problem. Note also that this optimization problem asks for the reconstruction of a delayed version of the state $x(t-\tau+1)$. However, we can always reconstruct of the current state $x(t)$ from $x(t-\tau+1)$ by recursively rolling the dynamics forward in time. Alternatively, when $A$ is invertible, we can directly recover $x(t)$ by re-writing the measurement equation as a function of $x(t)$.

As a final remark we note that it follows from the Cayley-Hamilton theorem that there is no loss of generality in taking the number of collected measurements $\tau$ to be no greater than the number of states $n$.

\subsection{Luenberger-like Observer}

Problem \ref{prob:opt} asks for the reconstruction of $z(t)$ using a batch approach based on the
measured data in the vector $Y$. On computationally restricted platforms we may be faced with the
difficulty of having to process new measurements before being able to compute a
solution to Problem \ref{prob:opt}. It would then be preferable to reconstruct
$z(t)$ using an algorithm that can incorporate new measurements as they become
available. This motivates the following problem.

\begin{problem}{\textbf{(Luenberger-like Observer)}}
For the linear control system defined by~\eqref{eq:sys_state} and \eqref{eq:sys_out} construct a dynamical system:
$$\zhat(t+1) = f(\zhat(t),U(t),Y(t)),$$
such that:
$$\lim_{t\to\infty} ( z^*(t) - \zhat(t) ) = 0,$$
where $z^*(t)=(x^*(t-\tau+1),E^*(t))\in \R^n\times \S_s$, $x^*(t)$ is the solution of~(\ref{eq:sys_state}) under the inputs $U(t)$, and $Y(t)$ is the sequence of the
last $\tau$ observed outputs corrupted by $E^*(t)$.
\label{prob:observer}
\end{problem}


\section{$s$-Sparse Observability and The Restricted Eigenvalue}
\label{sec:observ}
Recall that Problem~\ref{prob:opt} asks for the minimizer of $\half\Vert
Y-Q\hat{z}\Vert_2^2$. Since the matrix $Q$  has a non trivial kernel, there exist
many pairs $\zhat=(\xhat, \ahat)$ which solve this problem. In this section we look closely
at the fundamental question of uniqueness of solutions. 

\subsection{$s$-Sparse Observability}
We start by introducing the following notion of observability.
\begin{definition}{\textbf{($s$-Sparse Observable System)}}
The linear control system defined by~\eqref{eq:sys_state}
and \eqref{eq:sys_out}
is said to be $s$-sparse observable
if for every set $\Gamma\subseteq\{1,\hdots,p\}$ with $|\Gamma| = s$, the pair
$(A,C_{\overline{\Gamma}})$ is observable.
\end{definition}

In other words, a system is $s$-sparse observable if it remains 
observable after eliminating any choice of $s$ sensors. This strong
notion of observability underlies most of the results in this paper and can also be expressed using the notion of strong 
observability for linear systems described in \cite{KratzStrongObsv,strongObservability}. To make this connection explicit, consider a linear system $(A;B = 0;C;D = I_a)$ where the attack signal $a(t)$ is regarded as an input and $I_a$ is the
diagonal matrix defined so that the $i${th} element on the diagonal is zero whenever $a_i(t)$ is zero and is one otherwise. Since in our formulation we assume no knowledge of the support of the attack signal $a(t)$, we need to consider all the different supports for the attack vector. It is then not difficult to see that a linear system is $s$-sparse observable if and only if the systems $(A;B = 0;C;D = I_a)$ are strong observable for all the matrices $I_a$ obtained by considering attack vectors with all the possible supports.

We use the notion of $s$-sparse observability to characterize the uniqueness of solutions to Problem \ref{prob:opt}.

\begin{theorem}{\textbf{(Existence and uniqueness of solutions to Problem \ref{prob:opt})}} Problem \ref{prob:opt} has a unique solution, i.e., the function $\half\Vert
Y-Q\hat{z}\Vert_2^2$ has a unique minimum on the set $\R^n\times \S_s$, if and only
if the linear dynamical system defined by \eqref{eq:sys_state} and
\eqref{eq:sys_out} is $2s$-sparse observable.
\label{th:injectivity}	
\end{theorem}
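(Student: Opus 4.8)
\emph{Proof proposal.} The plan is to reduce the statement about uniqueness of a (non-convex) least-squares minimizer to an injectivity property of the matrix $Q = \matrix{\O & I}$ on the set $\R^n \times \S_s$, and then to translate that injectivity into $2s$-sparse observability. First I would observe that any measurement $Y$ arising in Problem~\ref{prob:opt} is of the form $Y = Qz$ for a true $z = (x,E) \in \R^n \times \S_s$, so the objective $\half\norm{Y - Q\zhat}^2$ attains its global minimum value $0$ at $\zhat = z$. Consequently a second minimizer must also have zero residual, i.e. satisfy $Q\zhat = Y = Qz$, so $Q(\zhat - z) = 0$. Writing $\zhat - z = (\delta x, \delta E)$ and using that the difference $\ahat - E$ of two cyclic $s$-sparse vectors is cyclic $2s$-sparse, we get $\delta E \in \S_{2s}$ and the kernel equation $\O\, \delta x = -\delta E \in \S_{2s}$. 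Thus uniqueness (for every valid $Y$) is equivalent to the implication $\O\,\delta x \in \S_{2s} \Rightarrow \delta x = 0$ (which then forces $\delta E = 0$).

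Next I would identify the condition $\O\,\delta x \in \S_{2s}$ with a loss of observability. The vector $\O\,\delta x$ has blocks $CA^{i-1}\delta x$ for $i = 1,\dots,\tau$, and it is cyclic $2s$-sparse exactly when there is a sensor set $\Gamma$ with $|\Gamma|\le 2s$ such that $C_{\overline{\Gamma}}A^{i-1}\delta x = 0$ for all $i$. Taking $\tau = n$ (justified by Cayley--Hamilton, as already noted in the excerpt), this says precisely that $\delta x$ lies in the unobservable subspace of the pair $(A, C_{\overline{\Gamma}})$. Hence a nonzero $\delta x$ with $\O\,\delta x \in \S_{2s}$ exists if and only if removing some $2s$ sensors renders the pair unobservable, i.e. if and only if the system is \emph{not} $2s$-sparse observable. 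This yields the ``if'' direction: $2s$-sparse observability forces $\delta x = 0$, hence $\zhat = z$.

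For the ``only if'' direction I would prove the contrapositive by explicitly exhibiting two distinct minimizers. If $(A, C_{\overline{\Gamma}})$ is unobservable for some $|\Gamma| = 2s$, pick $\delta x \neq 0$ in its unobservable subspace, so that $\O\,\delta x$ is supported (in each block) on $\Gamma$. Splitting $\Gamma = \Gamma_1 \cup \Gamma_2$ with $|\Gamma_1|,|\Gamma_2|\le s$ and letting $v_j$ denote $\O\,\delta x$ restricted to the sensors in $\Gamma_j$, we have $v_1, v_2 \in \S_s$ and $\O\,\delta x = v_1 + v_2$. Then the two points $z_1 = (\delta x,\, -v_1)$ and $z_2 = (0,\, v_2)$ both lie in $\R^n \times \S_s$ and satisfy $Qz_1 = \O\,\delta x - v_1 = v_2 = Qz_2$, while $z_1 \neq z_2$ because $\delta x \neq 0$. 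Thus the measurement $Y = v_2$ admits two distinct minimizers, and uniqueness fails.

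The main obstacle is handling the non-convex, non-linear structure of $\S_s$, which prevents a direct rank/kernel argument for $Q$. The crux is the combinatorial observation that differences of cyclic $s$-sparse vectors are cyclic $2s$-sparse, and conversely that any cyclic $2s$-sparse vector splits into two cyclic $s$-sparse pieces; this is exactly what makes $2s$ — rather than $s$ — the correct observability threshold, and it is the step I would want to state and verify most carefully.
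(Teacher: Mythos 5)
Your proposal is correct and takes essentially the same route as the paper's own proof: both reduce uniqueness of the minimizer to injectivity of the map $(x,E)\mapsto \O x + E$ on $\R^n\times\S_s$, use the observation that differences of cyclic $s$-sparse vectors are cyclic $2s$-sparse (together with the kernel condition $\O\,\delta x = -\delta E$) for the sufficiency direction, and for necessity construct two distinct minimizers by splitting the support set $\Gamma$ into two pieces of size at most $s$. The only differences are cosmetic: you make the $\tau=n$ (Cayley--Hamilton) step and the identification with the unobservable subspace explicit, and your sign conventions in the explicit construction $z_1=(\delta x,\,-v_1)$, $z_2=(0,\,v_2)$ are in fact cleaner than the paper's.
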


\begin{proof}
We first note that $\norm{Y-Q z}^2$ is always non-negative
and it becomes zero whenever $x$ is the true state and $E$ is the
true attack vector.  Hence, $\Vert
Y-\mathcal{O}x-E\Vert_2^2$ has a unique minimum if and only if the equality $Y=Qz = \mathcal{O}x+E$ 
only holds for the true state and the true
attack vector. However, this is equivalent to injectivity of the map
$f:\R^n\times \S_s\to \R^{p \tau}$ defined by $f(x,E)=\mathcal{O}x+E$.

To prove the stated result we assume, for the sake of contradiction, that $f$ is not
injective and $(A,C)$ is $2s$-sparse observable. Since $f$ is not injective there exist $(x,E), (x',E')\in \R^n\times \S_s$ such that $f(x,E)=f(x',E')$. Moreover, it follows from the definition of $f$ that $x\ne x'$. We now note that:
\begin{align*}
f(x,E)  =  f(x',E')
\Leftrightarrow  \mathcal{O}x+E=\mathcal{O}x'+E'
\Leftrightarrow  \mathcal{O}(x-x')=E'-E.
\end{align*}
Since the support of $a$ and $a'$ is at most $s$, the support of $E'-E$ is at
most $2s$. Let \mbox{$\Gamma = \textrm{supp}(E'-E)$} be the set consisting of the indices where
$E'-E$ is supported. Then, $\mathcal{O}(x-x')=E'-E$ implies $\mathcal{O}_{\overline{\Gamma}}(x-x')=0$ which in
turn implies that the pair $(A,C_{\overline{\Gamma}})$ is not observable since $x\ne x'$, a
contradiction.

Conversely, if the pair $(A,C_{\overline{\Gamma}})$ is not observable there is a non-zero
vector $v\in \ker\mathcal{O}_{\overline{\Gamma}}$. Fix $x\in \R^n$ and let $x'=x+v$. Since
$\mathcal{O}_{\overline{\Gamma}}(x-x')=0$, the support of $\mathcal{O}(x-x')$ is at
most $\vert\Gamma\vert=2s$. Split $\Gamma$ in two sets $\Gamma_1$ and $\Gamma_2$
so that $\Gamma=\Gamma_1\cup\Gamma_2$, $\vert\Gamma_1\vert\le s$ and
$\vert\Gamma_2\vert\le s$. We can now define $E$ by making its $i$th entry
$E_i$ to be equal to $(\mathcal{O}(x-x'))_i$ if $i\in \Gamma_1$ and zero otherwise.
Similarly, we define $E'$ by making its $i$th entry $E'_i$ to be equal to
$(\mathcal{O}(x-x'))_i$ if $i\in \Gamma_2$ and zero otherwise. As defined, the
support of $E$ and $E'$ is at most $s$ and the equality $f(x,E)=f(x',E')$ holds
thereby showing that $f$ is not injective.
\end{proof}

Although the notion of $s$-sparse observability is of combinatorial nature,
since we have to check observability of all possible pairs
$(A,C_{\overline{\Gamma}})$, it does clearly illustrate a fundamental
limitation: it is impossible to correctly reconstruct the state whenever $p/2$ or more sensors are attacked. Indeed, suppose that we have
an even number of sensors $p$ and $s = p/2$ sensors are attacked. Theorem
\ref{th:injectivity} requires  the system to still be observable after removing
$2s = p$ rows from the matrix $C$ which leads to $C_{\overline{\Gamma}}$ being
the linear transformation  mapping every state to zero.  This fundamental
limitation is consistent with and had been previously reported in \cite{Hamzaarxiv,HamzaAllerton,strongObservability}.

In \cite{Hamzaarxiv,HamzaAllerton}, the possibility of reconstructing the state under sensor attacks is characterized by Proposition 2 and under certain assumptions on the $A$ matrix by Proposition 4. The characterization based on $s$-sparse observability complements the characterizations in \cite{Hamzaarxiv,HamzaAllerton} in the following sense. Proposition 2 in \cite{Hamzaarxiv,HamzaAllerton} requires a test to be performed for every state $x\in \R^n$ and does not lead to an effective algorithm. In contrast, $s$-sparse observability only requires a finite, albeit large, number of computations. Proposition 4 in \cite{Hamzaarxiv,HamzaAllerton} requires an even smaller number of tests than $s$-sparse observability but only applies under additional assumptions on the $A$ matrix. Moreover, $s$-sparse observability connects the state reconstruction problem under sensor attacks to the well known systems theoretic notion of observability.
\subsection{$s$-Restricted Eigenvalue}
\label{sec:restEval}

While $s$-sparse observability provides a qualitative characterization of the existence
and uniqueness of solutions to Problem \ref{prob:opt}, in this section we discuss a more
quantitative version termed the \emph{$s$-restricted eigenvalue property} \cite{restrictedEvalue}. This property is 
directly related to the possibility of solving Problem \ref{prob:opt} and
Problem \ref{prob:observer} using gradient descent inspired methods.

\begin{definition}{\textbf{($s$-Restricted Eigenvalue of a Linear Control System)}}
For a given set \mbox{$\tilde{\Gamma}_s \subseteq \{1, \hdots, p\}$} with $\vert \tilde{\Gamma}_s\vert = p - s$, let $\Gamma_s$ be defined by:
\begin{align}
\Gamma_s=\tilde{\Gamma}_s\cup p\tilde{\Gamma}_s\cup 2p\tilde{\Gamma}_s\cup\hdots\cup (\tau -1)p\tilde{\Gamma}_s
\label{eq:gamma_def}
\end{align}
and define (with some abuse of notation) the matrix $Q_{\overline{\Gamma}_s} \in \R^{p \tau \times (n + s\tau)}$ as the matrix obtained from $Q = \matrix{\O & I}$ by removing from $I$ the columns indexed by $\Gamma_s$, i.e.:
\begin{align}
	Q_{\overline{\Gamma}_s} = \matrix{\O & (I_{\overline{\Gamma}_s})^T }.
\label{eq:Qz_gamma}	
\end{align}
The $s$-restricted eigenvalue of the control system defined by~(\ref{eq:sys_state}) and~(\ref{eq:sys_out})
is the the smallest eigenvalue of all the matrices $Q_{\overline{\Gamma}_s}^TQ_{\overline{\Gamma}_s}$ obtained by considering all the different sets $\tilde{\Gamma}_s$.
\end{definition}

The $s$-restricted eigenvalue of a control system can be related to the
$s$-observability as follows.

\begin{proposition}{\textbf{(Non-zero Restricted Eigenvalue)}}
Let the linear control system, defined by~\eqref{eq:sys_state}
and~(\ref{eq:sys_out}), be $2s$-sparse observable. There exists a $\delta_{2s} \in \R^+$ such
that for every $z = (x,E) \in \R^n \times \S_{2s}$ the $2s$-restricted eigenvalue
is no smaller than $\delta_{2s}$.
\label{lemma:rip}
\end{proposition}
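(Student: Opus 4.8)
The plan is to reduce the statement to a finite family of rank conditions and then identify each of those with the observability hypothesis. Since the $2s$-restricted eigenvalue is the smallest eigenvalue among the finitely many symmetric positive semidefinite matrices $Q_{\overline{\Gamma}_{2s}}^T Q_{\overline{\Gamma}_{2s}}$ obtained by letting $\tilde{\Gamma}_{2s} \subseteq \{1,\hdots,p\}$ range over all sets with $|\tilde{\Gamma}_{2s}| = p-2s$, it suffices to prove that every $Q_{\overline{\Gamma}_{2s}}$ has trivial kernel, i.e. full column rank. Once each $\lambda_{\min}\{Q_{\overline{\Gamma}_{2s}}^T Q_{\overline{\Gamma}_{2s}}\}$ is shown to be strictly positive, taking $\delta_{2s}$ to be the minimum over the finitely many choices of $\tilde{\Gamma}_{2s}$ yields the desired positive bound; this is exactly the quantity controlling $\norm{Qz}^2 \ge \delta_{2s}\norm{z}^2$ for every cyclic $2s$-sparse $z=(x,E)$, since the support of such an $E$ is contained in some $\overline{\Gamma}_{2s}$.

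To prove triviality of the kernel, I would fix $\tilde{\Gamma}_{2s}$ and suppose $Q_{\overline{\Gamma}_{2s}} \matrix{x \\ w} = 0$, where $w$ collects the retained attack coordinates. By \eqref{eq:Qz_gamma} this reads $\O x + (I_{\overline{\Gamma}_{2s}})^T w = 0$. The columns of $(I_{\overline{\Gamma}_{2s}})^T$ are the standard basis vectors indexed by $\overline{\Gamma}_{2s}$, so $(I_{\overline{\Gamma}_{2s}})^T w$ is supported on $\overline{\Gamma}_{2s}$. Consequently $\O x = -(I_{\overline{\Gamma}_{2s}})^T w$ must vanish on the complementary index set, that is $(\O x)_{\Gamma_{2s}} = 0$.

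The next step is to recognize $(\O x)_{\Gamma_{2s}} = 0$ as an observability condition. By the definition \eqref{eq:gamma_def} of $\Gamma_{2s}$, this index set selects, within each of the $\tau$ output blocks of $\O = \matrix{C \\ CA \\ \vdots \\ CA^{\tau-1}}$, precisely the rows associated with the sensors in $\tilde{\Gamma}_{2s}$. Hence $(\O x)_{\Gamma_{2s}} = 0$ is equivalent to $C_{\tilde{\Gamma}_{2s}} A^i x = 0$ for all $i \in \{0,\hdots,\tau-1\}$, i.e. to $x \in \ker \O_{\tilde{\Gamma}_{2s}}$, where $\O_{\tilde{\Gamma}_{2s}}$ is the observability matrix of the pair $(A, C_{\tilde{\Gamma}_{2s}})$. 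Since $|\tilde{\Gamma}_{2s}| = p-2s$, this pair is obtained from $(A,C)$ by deleting $2s$ sensors, so $2s$-sparse observability guarantees it is observable; taking $\tau$ large enough to reach the observability index (no loss, as $\tau=n$ suffices by Cayley--Hamilton) forces $x=0$. Then $(I_{\overline{\Gamma}_{2s}})^T w = 0$, and because this matrix has linearly independent columns we get $w=0$, so the kernel is trivial and $\lambda_{\min}\{Q_{\overline{\Gamma}_{2s}}^T Q_{\overline{\Gamma}_{2s}}\} > 0$.

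I expect the only delicate point to be the bookkeeping in the previous paragraph: correctly translating the cyclic index structure of $\Gamma_{2s}$ into the statement that the \emph{same} $2s$ sensors are removed in every time block, so that the resulting condition is genuinely observability of a single reduced pair $(A, C_{\tilde{\Gamma}_{2s}})$ rather than a weaker time-varying one. Once this identification is in place the argument is essentially the injectivity argument of Theorem \ref{th:injectivity} rephrased in terms of kernels, and the finiteness of the collection of sets $\tilde{\Gamma}_{2s}$ upgrades the pointwise positivity to the uniform lower bound $\delta_{2s} \in \R^+$.
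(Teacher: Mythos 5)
Your proposal is correct and follows essentially the same route as the paper: both reduce the claim to positive definiteness of the finitely many reduced Gram matrices $Q_{\overline{\Gamma}_{2s}}^T Q_{\overline{\Gamma}_{2s}}$ and take $\delta_{2s}$ to be the minimum of their smallest eigenvalues, with positivity ultimately grounded in $2s$-sparse observability. The only difference is organizational — the paper imports the kernel-triviality of each $Q_{\overline{\Gamma}_{2s}}$ by citing Theorem \ref{th:injectivity}, whereas you re-derive it inline via observability of the reduced pair $(A, C_{\tilde{\Gamma}_{2s}})$, which is exactly the argument inside that theorem's proof; you are also more explicit than the paper about requiring $\tau$ to reach the observability index (e.g., $\tau = n$) for the kernel argument to close.
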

In other words, although $Q$ has a non-trivial kernel, $Q^TQ$ has a non-zero minimum eigenvalue when operating on a vector $z = (x,E)$ with
 cyclic $2s$-sparse $z_E$, i.e., the following inequality holds if $z = (x,E) \in \R^n
\times \S_{2s}$:
\begin{align}
	0 < \delta_{2s} z^T z &\le z^TQ^TQ z. 
\label{eq:REV}	
\end{align}
\begin{proof}
Inequality \eqref{eq:REV} follows directly from Theorem
\ref{th:injectivity} which states that $Qz\ne 0$ for any $z=(x,E)\ne 0$ with $z_E$ cyclic $2s$-sparse. This  is
equivalent to ${z^TQ^TQz}> 0$. 
Define $\Gamma_{2s} = \textrm{supp}(z_E)$ and $\Gamma = n + \Gamma_{2s}$. Now note that ${z^TQ^TQ z} = {z^T_{\overline{\Gamma}} Q_{\overline{\Gamma}_{2s}}^T Q_{\overline{\Gamma}_{2s}}  z_{\overline{\Gamma}}}$ and hence:
 $${\lambda_{\min}\left\{Q_{\overline{\Gamma}_{2s}}^T Q_{\overline{\Gamma}_{2s}} \right\}z_{\overline{\Gamma}}^T  z_{\overline{\Gamma}}}
\le   {z_{\overline{\Gamma}}^T Q_{\overline{\Gamma}_{2s}}^T Q_{\overline{\Gamma}_{2s}} z_{\overline{\Gamma}}},$$ 
from which we conclude that
inequality \eqref{eq:REV} holds with:
$$\delta_{2s}= \min_{\Gamma_{2s}}  \quad \lambda_{\min}\left\{Q_{\overline{\Gamma}_{2s}}^T Q_{\overline{\Gamma}_{2s}}\right\},$$
where $\Gamma_{2s}$ is defined in \eqref{eq:gamma_def}.
\end{proof}

Similarly to $2s$-sparse observability, the computation of $\delta_{2s}$ is
combinatorial in nature since one needs to calculate the eigenvalues of 
$Q_{\overline{\Gamma}_{2s}}^T Q_{\overline{\Gamma}_{2s}}$ 
for all the different 
sets $\Gamma_{2s}$.

\section{Event Triggered Projected Gradient Descent}
\label{sec:tgd}

The problem of reconstructing a sparse signal from measurements is well studied
in the compressive sensing literature \cite{CandesCS,TaoCS}. In this section we
present an algorithm for state reconstruction that can be seen as an extension 
of the iterative hard thresholding algorithm, reported in \cite{AcceleratedIHT}, 
to the case where part of the signal to be reconstructed is sparse and part is
governed by linear dynamics. We also draw inspiration from \cite{algorithmsAsControlBook}  where it is shown
how to interpret optimization algorithms as dynamical systems with feedback. 
Once this link is established, Lyapunov analysis techniques become available  to
design as well as to analyze the performance of optimization algorithms.

\subsection{The Algorithm}

The Event Triggered Projected Gradient Descent (ETPG) algorithm updates the estimate \mbox{$\hat{z}\in \R^n\times \R^{p\tau}$} of \mbox{$z\in \R^n\times \R^{p\tau}$} by following the gradient direction of the Lyapunov candidate function \mbox{$V(\hat{z}) = \half\norm{Y - Q \hat{z}}^2$}, i.e.
$$ \hat{z}^{+} := \hat{z} + \eta Q^T (Y - Q\hat{z}) $$
for some step size $\eta$.
Since $\hat{z}$ will not, in general, satisfy the desired sparsity constraints, gradient steps are alternated with projection steps using the projection operator:
$$\Pi:\R^n\times\R^{p\tau}\to \R^n\times \mathbb{S}_s$$
that takes $\hat{z}\in \R^n\times\R^{p\tau}$ to the closest point in $\R^n\times \mathbb{S}_s$. 
In order to ensure that $V$ decreases along the execution of the algorithm, multiple gradient steps are performed for each projection step.  By monitoring the evolution of $V$, the algorithm can  determine when the decrease in $V$ caused by the gradient steps compensates the increase caused by $\Pi$. This is akin to triggering an event in event-triggered control \cite{eventTriggered}.

Define the error vector as $e = z^* - \hat{z}$ (where $z^{*}$ denotes the solution of Problem \ref{prob:opt}) and note that $e_E = z^*_E - \hat{z}_E$ is, at most, $2s$-sparse whenever $\hat{z}_E$ is at most $s$-sparse. From Theorem \ref{th:injectivity} we know that the intersection of the cyclic $2s$-sparse set $\S_{2s}$ with the kernel of $Q$ is only one point, $e = 0$, which corresponds to $\hat{z} = z^*$. Hence, by driving $V(\hat{z})$ to zero while forcing $\zhat_E$ to be cyclic
$s$-sparse, $\hat{z}$ is guaranteed to converge to $z^*$. 
Formalizing these ideas leads to Algorithm~\ref{alg:etpg}.

\begin{algorithm}[ht]
\caption{Event-Triggered Projected Gradient Descent}
\begin{algorithmic}[1]
\STATE Initialize $k:= 1$, $m:= 0$, $\hat{z}_\Pi^{(0)}: = 0$;
\WHILE{$V\left(\hat{z}_\Pi^{(k-1)}\right) > 0$} \label{line:guard}
	\STATE $\hat{z}^{(k,0)} := \hat{z}_\Pi^{(k-1)}$;
	\STATE reset $m: = 0$, $V_{temp}: = V\left(\hat{z}_\Pi^{(k-1)}\right)$;
	\WHILE {$V_{temp} \ge V\left(\hat{z}_\Pi^{(k-1)}\right)$} \label{line:trigger}
		\STATE $\hat{z}^{(k,m+1)}:= \hat{z}^{(k,m)} + \eta Q^T\left(Y - Q \hat{z}^{(k,m)} \right)$;
		\STATE $V_{temp} := V\left( \Pi(\hat{z}^{(k,m+1)}) \right)$;
		\STATE $m := m+1$;
	\ENDWHILE
	\STATE $\hat{z}_\Pi^{(k)} := \Pi(\hat{z}^{(k,m)})$;
	\STATE $k: = k + 1$;
\ENDWHILE
\STATE \textbf{return} $\hat{z}_\Pi^{(k)}$
\end{algorithmic}
\label{alg:etpg}
\end{algorithm}

A typical execution of the ETPG algorithm is illustrated in Figure \ref{fig:lyap_intro} where the
evolution of the Lyapunov candidate function $V$ is shown. We can see that at
$k=0,1$ the ETPG algorithm applied one gradient step before
applying the new projection step, while at $k=2$ and $k=8$ two gradient steps were 
executed to compensate for the increase in $V$ caused by the projection step. This adaptive nature of the
ETPG algorithm is a consequence of the event-triggering mechanism. The algorithm
ensures that the subsequence depicted in blue in Figure
\ref{fig:lyap_intro} is a converging subsequence and thus 
stability in the Lyapunov sense is attained.

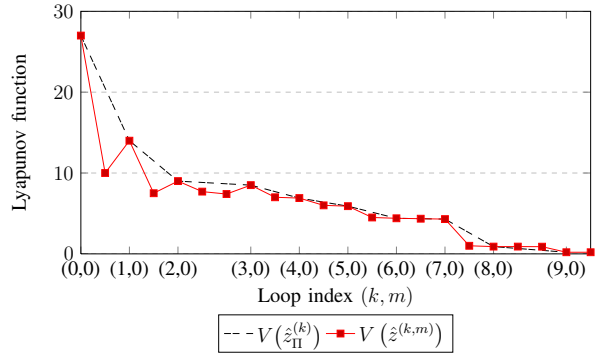
\begin{figure}
\centering
\resizebox{0.48\textwidth}{!}{
\begin{tikzpicture}
		\begin{axis}[
	    	xlabel=Loop index ${(k,m)}$,
		xmin = {(0,0)},
		xmax = {(9,1)},
		ymax = 30,
		ymin = 0,
		y=0.5cm/3,
    		x=0.5cm,
	    	ylabel=Lyapunov function,
		symbolic x coords={ 
						{(0,0)},{(0,1)},
						{(1,0)},{(1,1)},
						{(2,0)},{(2,1)},{(2,2)},
						{(3,0)},{(3,1)},
						{(4,0)},{(4,1)},
						{(5,0)},{(5,1)},
						{(6,0)}, {(6,1)},
						{(7,0)}, {(7,1)},
						{(8,0)},{(8,1)},{(8,2)},
						{(9,0)},{(9,1)}} ,
		xtick={{(0,0)},{(1,0)},{(2,0)},{(3,0)},{(4,0)},{(5,0)},{(6,0)},{(7,0)},{(8,0)},{(9,0)}},
	    	legend style={at={(0.5,-0.25)},anchor=north,legend columns=-1},
	    	ymajorgrids=true,
	    	grid style=dashed,
		]
		\addplot [dash pattern=on 4pt off 1pt on 4pt off 4pt] 
		coordinates {	({(0,0)}, 27) ({(1,0)}, 14) ({(2,0)}, 9) ({(3,0)}, 8.5) ({(4,0)}, 6.9) ({(5,0)}, 5.9) ({(6,0)}, 4.4) ({(7,0)}, 4.3)   ({(8,0)}, 0.9) ({(9,0)}, 0.2)
				};	
		\addplot 
			coordinates {	({(0,0)}, 27) ({(0,1)}, 10) 
						({(1,0)}, 14) ({(1,1)}, 7.5) 
						({(2,0)}, 9) ({(2,1)}, 7.7) ({(2,2)}, 7.4) 
						({(3,0)}, 8.5) ({(3,1)}, 7) 
						({(4,0)}, 6.9) ({(4,1)}, 6) 
						({(5,0)}, 5.9) ({(5,1)}, 4.5) 
						({(6,0)}, 4.4) ({(6,1)}, 4.35) 
						({(7,0)}, 4.3) ({(7,1)}, 1) 
						({(8,0)}, 0.9) ({(8,1)}, 0.9) ({(8,2)}, 0.89) 
						({(9,0)}, 0.2) ({(9,1)}, 0.2)
					};			
		\legend{$V\big(\hat{z}_\Pi^{(k)}\big)$, $V\left(\hat{z}^{(k,m)}\right)$}	
		\end{axis}
	\end{tikzpicture}}
	\caption{An example of the evolution of the Lyapunov candidate function V while
	running the ETPG algorithm. The subsequence highlighted in blue (dashed) is a decreasing subsequence.}
	\label{fig:lyap_intro}
\end{figure}

The ETPG algorithm uses two counters, one for each loop. Accordingly, we will
use the notation $\hat{z}^{(k,m)}$ where $k$ counts the number of iterations of the
outer loop, while $m$ counts the number of internal gradient
descent steps within each iteration of the inner loop.

\subsection{The Projection Operator}

Before discussing the convergence of the proposed algorithm, it is important to show how to compute the projection operator $\Pi$.
\begin{definition}
Given a vector $z = (x,E) \in \R^n \times \R^{p\tau}$, we denote by $\Pi(z)$ 
the element of $\R^n \times \S_s$ closest to $z$ in the 2-norm sense, i.e.,
\begin{align}
\norm{\Pi(z) - z} \le \norm{z' - z},
\label{eq:triangEq}
\end{align}
for any $z' \in \R^n \times \S_s$.
\label{def:projection}
\end{definition}

We first note that $\Pi(z)=\Pi(x,E)=(x,\Pi'(E))$. To explain how $\Pi'$ is computed, recall from Example \ref{ex:example1} the matrix $\tilde{E}$ obtained by formatting $E \in \R^{p\tau}$ so that the $i$th column of $\tilde{E}$ is given by $a(t-i+1)$.
Now, define $\overline{E} \in \R^p$ by $\overline{E}_i =
\norm{\tilde{E}_i}^2$. By noting that $\norm{E}^2 = \norm{\overline{E}}^2$ and that $E$ is cyclic $s$-sparse if and only if $\overline{E}$ is $s$-sparse, it immediately follows that $\Pi'(z)$ can be computed by setting to zero the elements of $E$ corresponding to the smallest $p - s$ entries of $\overline{E}$.
\begin{example}
Let us consider the following example with $p = 3, \tau = 2$ and $s=1$:
\begin{align*}
	E = \matrix{1 & 2 & 3 & 4 & 5 & 6 & 7 & 8 & 9}^T \in \R^{p\tau}
\end{align*}
Hence,
\begin{align*}
	\tilde{E} &= \matrix{1 & 4 & 7 \\ 2 & 5 & 8 \\ 3 & 6 & 9} \in \R^{p\times \tau}, \qquad
	 \overline{E} = \matrix{66 \\ 93 \\ 126} \in \R^{p}, \qquad
	 \Pi'(\overline{E} ) = \matrix{0 \\ 0 \\ 126} \in \R^{p},
\end{align*}
which leads to:
\begin{align*}
	\Pi'(E) = \matrix{0 & 0 & 3 & 0 & 0 & 6 & 0 & 0 & 9}^T.
\end{align*}
\end{example}

\subsection{Convergence of the ETPG Algorithm}
\label{sec:stability}

In this subsection we discuss the convergence and performance of the ETPG
algorithm. 
The main result of this section is stated in the following theorem.

\begin{theorem}{\textbf{(Convergence of the ETPG algorithm)}}
Let the linear control system defined by~(\ref{eq:sys_state})
and \eqref{eq:sys_out} be $2s$-sparse observable with $2s$-restricted eigenvalue $\delta_{2s}$ and let  $z^* = (x^*, E^*) \in
\R^n \times \S_s$ denote the solution of Problem \ref{prob:opt}. The ETPG
algorithm converges to $z^*$ if the following holds:
\begin{enumerate}
\item $0 < \eta < 2\lambda^{-1}_{\max}\{Q^T Q\}$,
\item 
$\delta_{2s} > \frac{4}{9}{\lambda_{\max}\{Q^TQ\}}$.
\end{enumerate}
\label{th:stability}	
\end{theorem}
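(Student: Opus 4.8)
The plan is to recast the algorithm as an error recursion and to track the Lyapunov value $V$ along the two nested loops. Since $Y = Qz^*$, writing $e = z^* - \hat{z}$ gives $V(\hat{z}) = \half\norm{Qe}^2 = \half e^T Q^T Q e$, and a single gradient step $\hat{z} \mapsto \hat{z} + \eta Q^T(Y - Q\hat{z})$ is exactly the linear error map $e \mapsto (I - \eta Q^T Q)e$. The workhorse estimate is the two-sided sandwich that holds whenever $e$ is cyclic $2s$-sparse, i.e. whenever $\hat{z}_E$ is cyclic $s$-sparse: by Proposition \ref{lemma:rip},
\begin{align*}
\half \delta_{2s}\norm{e}^2 \le V(\hat{z}) \le \half \lambda_{\max}\{Q^TQ\}\norm{e}^2 .
\end{align*}
In particular every outer-iterate error $e^{(k)} = z^* - \hat{z}_\Pi^{(k)}$ is cyclic $2s$-sparse (a difference of two cyclic $s$-sparse vectors), so both inequalities apply at the points $\hat{z}_\Pi^{(k)}$.

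First I would dispatch the inner loop. Condition (1), $0 < \eta < 2\lambda_{\max}^{-1}\{Q^TQ\}$, is precisely the step-size range making plain gradient descent on the quadratic $V$ non-increasing, and since $Y \in \im Q$ the infimum of $V$ over the affine subspace $\hat{z}^{(k,0)} + \im(Q^T)$ swept by the (unprojected) inner iterates is $0$. Hence $V(\hat{z}^{(k,m)}) \downarrow 0$ and $\hat{z}^{(k,m)} \to w_\infty := z^* - P_{\ker Q}\, e^{(k,0)}$ as $m \to \infty$, where $P_{\ker Q}$ denotes orthogonal projection onto $\ker Q$: gradient steps never move the $\ker Q$ component of the error.

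Next comes the heart of the argument, showing the inner while-loop terminates and, simultaneously, that the outer sequence contracts. By construction the event-trigger exits the inner loop only when $V(\Pi(\hat{z}^{(k,m)})) < V(\hat{z}_\Pi^{(k-1)})$, so once termination is established the sequence $V(\hat{z}_\Pi^{(k)})$ is automatically strictly decreasing. To establish termination I would pass to the limit $m \to \infty$ and estimate $V(\Pi(w_\infty))$. The crucial structural fact is that $\Pi$ modifies only the $E$-block, so the projection residual $w_\infty - \Pi(w_\infty)$ has zero state component and $Q$ acts on it as the identity; combined with $Q(z^* - \Pi(w_\infty))$ being the image of a cyclic $2s$-sparse vector, Proposition \ref{lemma:rip} lets me bound $V(\Pi(w_\infty))$ by a fixed fraction $\rho$ of $V(\hat{z}_\Pi^{(k-1)})$, with $\rho < 1$ holding exactly under condition (2), $\delta_{2s} > \tfrac49 \lambda_{\max}\{Q^TQ\}$. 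Continuity of $V\circ\Pi$ at $w_\infty$ (away from thresholding ties) then yields a finite $m$ at which the trigger fires, and the same estimate gives the contraction $V(\hat{z}_\Pi^{(k)}) \le \rho\, V(\hat{z}_\Pi^{(k-1)})$.

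The contraction forces $V(\hat{z}_\Pi^{(k)}) \to 0$, and invoking the lower half of the sandwich together with $\delta_{2s} > 0$ (guaranteed by $2s$-sparse observability via Proposition \ref{lemma:rip}) gives $\norm{e^{(k)}}^2 \le \tfrac{2}{\delta_{2s}} V(\hat{z}_\Pi^{(k)}) \to 0$, i.e. $\hat{z}_\Pi^{(k)} \to z^*$, which is the claim. The main obstacle is the quantitative estimate above: because the gradient steps leave the $\ker Q$ component of the error untouched, naive bounds such as $\norm{z^*-\Pi(w_\infty)} \le 2\norm{z^*-w_\infty}$ lose too much and would demand $\delta_{2s}$ comparable to (or larger than) $\lambda_{\max}\{Q^TQ\}$, which is impossible. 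The resolution is to exploit that $\Pi$ re-sparsifies the error — returning it to $\S_{2s}$, where Proposition \ref{lemma:rip} is available — and that the discarded residual lives entirely in the identity block of $Q$; carrying out this bookkeeping carefully is what produces the sharp threshold $\tfrac49$ and is where I expect the real work to lie.
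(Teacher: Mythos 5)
Your proposal reproduces the scaffolding of the paper's argument faithfully: the error map $e \mapsto (I-\eta Q^TQ)e$, the sandwich $\frac{\delta_{2s}}{2}\norm{e}^2 \le V(\hat{z}) \le \frac{\lambda_{\max}\{Q^TQ\}}{2}\norm{e}^2$ for cyclic $2s$-sparse errors (the paper's \eqref{eq:V_W}), the identification of the limit of the inner gradient iterations with the kernel projection $z^* - P_{\ker Q}\,e^{(k,0)}$ (the paper's Propositions \ref{prop:angle} and \ref{prop:gradient}, via $(I-\eta Q^TQ)^m \to I - Q^+Q$), the event trigger as the device that makes $V(\hat{z}_\Pi^{(k)})$ strictly decrease, and contraction plus the lower half of the sandwich to conclude convergence. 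The genuine gap is that the step you yourself call the heart of the argument is never carried out: you assert that Proposition \ref{lemma:rip}, together with the facts that $\Pi$ touches only the $E$-block and that the discarded residual lies in the identity block of $Q$, ``lets me bound'' $V(\Pi(w_\infty)) \le \rho\, V(\hat{z}_\Pi^{(k-1)})$ with $\rho<1$ exactly under condition (2), and you close by conceding that this bookkeeping ``is where I expect the real work to lie.'' That bookkeeping \emph{is} the theorem; no inequality chain is offered that produces any $\rho<1$, let alone the specific threshold $\frac{4}{9}$. The paper, by contrast, makes this step concrete through Proposition \ref{prop:sparsify} ($W\circ\Pi \le 2W$ --- precisely the triangle-inequality bound you reject), Proposition \ref{prop:gradient} (the factor $(1-\delta_{2s}\lambda_{\max}^{-1}\{Q^TQ\})^{1/2}+\varepsilon$ after sufficiently many gradient steps), and the algebra of Proposition \ref{prop:termination}, which in addition yields the explicit iteration bound \eqref{Eq:MSteps}. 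Your substitute for that bound --- continuity of $V\circ\Pi$ at $w_\infty$ --- is itself shaky, since $\Pi$ is a hard-thresholding operator and $V\circ\Pi$ is genuinely discontinuous at ties; the paper's quantitative rate $\norm{(I-Q^+Q)-(I-\eta Q^TQ)^m} \le (1-\eta\delta_{2s})^m$ is what removes any need for such a continuity argument.

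Two remarks on your stated reason for rejecting the paper's route, since it is the one place where you diverge on substance rather than detail. Combining $W\circ\Pi\le 2W$ with the gradient contraction requires $2(1-x)^{1/2} < x^{1/2}$ for $x = \delta_{2s}\lambda_{\max}^{-1}\{Q^TQ\}$, i.e.\ $x > \frac{4}{5}$; and since for $Q = \matrix{\O & I}$ one always has $\delta_{2s} \le \min\left(\lambda_{\min}\{\O^T\O\},1\right)$ while $\lambda_{\max}\{Q^TQ\} = 1+\norm{\O}^2$, so that $x \le \half$, this requirement is indeed unachievable --- your instinct is sound, and it in fact exposes the loose point of the paper's own Proposition \ref{prop:termination}, whose implication chain establishes the inequality with $1-x^{1/2}$ where $(1-x)^{1/2}$ is needed. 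But this observation cuts against the paper; it does not repair your proof. Having discarded the paper's lemma, you owe a replacement estimate showing that your finer residual bookkeeping beats the factor of $2$, and none is given, so whether it yields $\frac{4}{9}$ (or any workable threshold) is left entirely unverified. As it stands, the proposal is a correct reconstruction of the framework with a conjecture sitting where the decisive estimate must go.
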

\begin{remark}
Theorem \ref{th:stability} shows that the ETPG algorithm correctly reconstructs the state whenever the attacker has access to no more than $s$ sensors and the system is $2s$-sparse observable. In practice, one does not exactly know the number $s$ of attacked sensors. However, if an upper bound $\overline{s}$ is known, the ETPG is guaranteed to work as long as the system is $2\overline{s}$-sparse observable. For this reason the ETPG algorithm is run with the largest $s$ for which $2s$-spare observability holds.
\end{remark}
\begin{remark}
The ETPG algorithm can be seen as a generalization of the ``Normalized Iterative Hard Thresholding'' (NIHT) algorithm presented in \cite{NormlaizedIHT}. The ETPG algorithm uses multiple gradient descent steps supervised by an event-triggering mechanism whereas the NIHT algorithm only uses one gradient step.
These two ingredients, multiple gradient steps and event-triggering, result in weaker conditions for convergence: $\delta_{2s} > \frac{4}{9}{\lambda_{\max}\{Q^TQ\}}$ for the ETPG algorithm vs $\delta_{2s} > \frac{8}{9}{\lambda_{\max}\{Q^TQ\}}$ for the NIHT algorithm (Theorem 4 in \cite{NormlaizedIHT}).
\end{remark}

In the remainder of this subsection, we focus on proving this theorem by resorting to the Lyapunov candidate function:
\begin{align*}
	W\left(\hat{z}\right) & = \norm{z^{*} - \hat{z}} = \norm{e}.
\end{align*}
Note that, unlike $V$, the Lyapunov candidate function $W$ has a unique minimum at $\hat{z} = z^{*}$. However, one
should note that evaluation of $W$ requires the knowledge of $z^*$ which we do
not have a priori. Accordingly, $W$  is only adequate  to discuss stability but can not be used to design the ETPG algorithm.

The proof is divided into small steps. First we discuss the effect
of the two operations used inside the ETPG algorithm: projection
and gradient descent. This is done in Propositions
\ref{prop:sparsify}, \ref{prop:angle}, and \ref{prop:gradient}. Next we 
provide sufficient conditions under which the inner loop is guaranteed to
terminate. These are given in Proposition \ref{prop:termination}. Finally, we use the triggering condition to establish that $W$ is a Lyapunov function.

\subsubsection{Effect of the Projection Operator}
\begin{proposition}
The following inequality holds for any $z \in \R^{n} \times \R^{p\tau}$:
\begin{align*}
	W \circ \Pi(z) &\le 2 W(z).
\end{align*}
\label{prop:sparsify}
\end{proposition}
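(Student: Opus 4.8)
The plan is to prove the bound by a single application of the triangle inequality together with the defining nearest-point property of the projection operator $\Pi$, exploiting the fact that the target $z^*$ already lies in the set $\R^n \times \S_s$ onto which $\Pi$ projects. Since $W(\hat{z}) = \norm{z^* - \hat{z}}$, the statement is purely a metric one about the point $z$, its projection $\Pi(z)$, and the fixed reference point $z^*$, so no structural properties of $Q$, the dynamics, or $s$-sparse observability are needed here.

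First I would write $W \circ \Pi(z) = \norm{z^* - \Pi(z)}$ and insert $z$ via the triangle inequality to obtain $\norm{z^* - \Pi(z)} \le \norm{z^* - z} + \norm{z - \Pi(z)}$. The first term on the right is exactly $W(z)$, so it remains only to control the second term $\norm{z - \Pi(z)}$, which is the distance from $z$ to its projection onto $\R^n \times \S_s$.

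The key step is to bound this second term by $W(z)$ as well. Since $z^* = (x^*, E^*) \in \R^n \times \S_s$, it is an admissible competitor in the nearest-point inequality~\eqref{eq:triangEq} that defines $\Pi$; taking $z' = z^*$ there yields $\norm{\Pi(z) - z} \le \norm{z^* - z} = W(z)$. Substituting this into the triangle-inequality estimate gives $W \circ \Pi(z) \le W(z) + W(z) = 2 W(z)$, which is precisely the claim.

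I do not expect any genuine obstacle; the only point requiring attention is to observe that $z^*$ belongs to $\R^n \times \S_s$ so that it may legitimately be used as the competing vector $z'$ in~\eqref{eq:triangEq}. The factor of $2$ arises directly from using $z^*$ itself as this competitor, and it should be understood as a worst-case bound (attained, roughly, when $z$ sits far from the feasible set in a direction for which $\Pi(z)$ and $z^*$ lie on opposite sides), rather than something this argument can sharpen.
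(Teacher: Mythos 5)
Your proposal is correct and follows essentially the same argument as the paper's proof: the triangle inequality $\norm{z^* - \Pi(z)} \le \norm{z^* - z} + \norm{\Pi(z) - z}$ followed by the nearest-point property~\eqref{eq:triangEq} with $z' = z^*$ to bound the second term by $\norm{z^* - z}$. Your remark that the argument hinges on $z^*$ lying in $\R^n \times \S_s$, so that it is a valid competitor in~\eqref{eq:triangEq}, is exactly the (implicit) justification in the paper.
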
 
\begin{proof} The result is proved by direct computation as
follows:
\begin{align*}	
	W \circ \Pi(z)	&= \norm{z^* - \Pi(z) }
				 \le \norm{z^* - z} + \norm{\Pi(z) - z}
					 \le 2 \norm{z^* - z}
					 = 2 W(z),
\end{align*}
where the first inequality follows from the triangular inequality and the second inequality follows from ~\eqref{eq:triangEq}.
\end{proof}

\subsubsection{Effect of Gradient Steps}

In this subsection we study the effect of the gradient descent step along with sufficient conditions for termination of the inner loop of the ETPG algorithm. The first proposition is a technical result used in the proof of the second proposition that characterizes the decease in $W$ caused by gradient descent steps.

\begin{proposition}
Let the linear control system defined by~(\ref{eq:sys_state}) and~(\ref{eq:sys_out}) be $2s$-sparse observable with $2s$-restricted eigenvalue $\delta_{2s}$. The following holds for any $z\in \R^{n} \times \S_{2s}$:
	$$ \norm{(I - Q^+Q) z}  \le (1 - \delta_{2s} \lambda_{\max}^{-1}\{Q^T Q\})^\half \norm{z}$$
where $Q^+$ is the Moore-Penrose pseudo inverse of Q.
\label{prop:angle}
\end{proposition}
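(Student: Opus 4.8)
The plan is to read the operator $I - Q^+ Q$ geometrically and then reduce everything to the restricted-eigenvalue inequality \eqref{eq:REV}. The key observation is that $Q^+ Q$ is symmetric and idempotent (two of the defining Moore--Penrose conditions), hence it is the orthogonal projector onto the row space of $Q$, i.e. onto $(\ker Q)^\perp$; consequently $I - Q^+ Q$ is the orthogonal projector onto $\ker Q$. First I would record this together with the identity $Q Q^+ Q = Q$. Because $Q^+ Q$ is an orthogonal projection, the Pythagorean identity
$$\norm{z}^2 = \norm{Q^+ Q z}^2 + \norm{(I - Q^+ Q) z}^2$$
holds for every $z$, which turns the claimed estimate into a matter of lower-bounding $\norm{Q^+ Q z}^2$.

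To obtain that lower bound I would use $Q Q^+ Q = Q$ to write $Q z = Q (Q^+ Q z)$, so that
$$z^T Q^T Q z = \norm{Q z}^2 = \norm{Q (Q^+ Q z)}^2 \le \lambda_{\max}\{Q^T Q\}\, \norm{Q^+ Q z}^2,$$
where the final inequality is the standard Rayleigh bound applied to the vector $Q^+ Q z$. Rearranging gives $\norm{Q^+ Q z}^2 \ge z^T Q^T Q z / \lambda_{\max}\{Q^T Q\}$. This is the point at which the hypothesis $z \in \R^n \times \S_{2s}$ must be used: Proposition \ref{lemma:rip} (inequality \eqref{eq:REV}) yields $z^T Q^T Q z \ge \delta_{2s}\norm{z}^2$, and therefore
$$\norm{Q^+ Q z}^2 \ge \frac{\delta_{2s}}{\lambda_{\max}\{Q^T Q\}}\, \norm{z}^2.$$

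Substituting this into the Pythagorean identity gives
$$\norm{(I - Q^+ Q) z}^2 = \norm{z}^2 - \norm{Q^+ Q z}^2 \le \left(1 - \delta_{2s}\lambda_{\max}^{-1}\{Q^T Q\}\right)\norm{z}^2,$$
and taking square roots produces the stated bound. I expect the only real subtlety to be the justification of the two pseudo-inverse facts --- that $I - Q^+ Q$ is an orthogonal projection (so the cross term in $\norm{z}^2$ vanishes and the Pythagorean split is valid) and that $Q Q^+ Q = Q$ --- together with the bookkeeping point that the restricted-eigenvalue inequality \eqref{eq:REV} is invoked only on vectors whose $E$-component is cyclic $2s$-sparse, which is precisely the standing hypothesis $z \in \R^n \times \S_{2s}$. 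Everything else is a direct chain of Rayleigh-quotient estimates, so no combinatorial or optimization difficulty arises at this stage; the combinatorial work was already absorbed into the definition of $\delta_{2s}$ in Proposition \ref{lemma:rip}.
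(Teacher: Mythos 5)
Your proof is correct, and it reaches the stated bound by a recognizably different route through the same two pillars: the orthogonal-projector structure of $Q^+Q$ and the restricted-eigenvalue inequality of Proposition~\ref{lemma:rip}. The paper's own proof stays at the level of quadratic forms: it uses idempotence to write $\norm{(I-Q^+Q)z}^2 = z^T(I-Q^+Q)z$, then substitutes the \emph{explicit} formula $Q^+ = Q^T(QQ^T)^{-1}$ --- valid here because $QQ^T = I + \O\O^T$ is positive definite, a structural fact about this particular $Q = \matrix{\O & I}$ --- and bounds $(QQ^T)^{-1} \ge \lambda_{\max}^{-1}\{Q^TQ\}\, I$ before invoking \eqref{eq:REV}. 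You instead avoid any explicit formula for $Q^+$: the Pythagorean split $\norm{z}^2 = \norm{Q^+Qz}^2 + \norm{(I-Q^+Q)z}^2$, the Moore--Penrose identity $QQ^+Q = Q$, and a Rayleigh bound give the key estimate $\norm{Q^+Qz}^2 \ge \lambda_{\max}^{-1}\{Q^TQ\}\, z^TQ^TQz$, after which \eqref{eq:REV} finishes exactly as in the paper's step $(d)$. The two middle steps are mathematically equivalent (both amount to $z^TQ^+Qz \ge \lambda_{\max}^{-1}\{Q^TQ\}\,\norm{Qz}^2$), but yours buys a little extra generality and robustness: it never requires $Q$ to have full row rank, whereas the paper's expression for $Q^+$ does and must be justified by the special form of $Q$; conversely, the paper's chain is shorter because it never needs to argue that the cross term in the Pythagorean identity vanishes. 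Your closing bookkeeping point --- that the hypothesis $z \in \R^n \times \S_{2s}$ enters only through \eqref{eq:REV}, all other steps holding for arbitrary $z$ --- is exactly where the paper uses its $2s$-sparse observability assumption as well.
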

\begin{proof}
The result follows from direct computations as follows:
\begin{align*}
\norm{(I - Q^+Q) z }^2 &= z^T (I - Q^+Q)^2 z 
					\stackrel{(a)}{=}z^T (I - Q^+Q) z
					\stackrel{(b)}{=}z^T (I - Q^T(QQ^T)^{-1}Q) z\\
					&\stackrel{(c)}{\le}z^T (I - \lambda_{\max}^{-1}\{Q^T Q\}Q^TQ)z 
					\stackrel{(d)}{\le} (1 - \delta_{2s}\lambda_{\max}^{-1}\{Q^T Q\}) {z^Tz}
\end{align*}
where equality: $(a)$ follows by noticing that the projection operator $I - Q^+Q$ is idempotent; $(b)$ follows from the definition of $Q^+ = Q^T (QQ^T)^{-1}$; $(c)$ follows from the fact that $QQ^T = I + \O\O^T$ is a positive definite matrix and hence the inverse $(QQ^T)^{-1}$ exists and can be bounded as $(QQ^T)^{-1} \ge \lambda_{\max}^{-1}\{Q^T Q\} I$; and $(d)$ follow from the $2s$-sparse observability assumption along with Proposition \ref{lemma:rip}.
\end{proof}

\begin{proposition}
Let the linear control system defined by~(\ref{eq:sys_state})
and~(\ref{eq:sys_out}) be $2s$-sparse observable with $2s$-restricted eigenvalue $\delta_{2s}$. If the following conditions hold:
\begin{enumerate}
\item the estimate $\zhat_E^{(k,0)}$ is $s$-sparse, i.e., $\zhat^{(k,0)} = \left(\zhat_x^{(k,0)},\zhat_E^{(k,0)}\right) \in \R^n \times \S_s$, 
\item the step size $\eta$ satisfies $\eta < 2 \lambda_{\max}^{-1}\{Q^T Q\}$,
\end{enumerate}
then for any $\varepsilon \in \R^+$, the following inequality holds for any $m\ge  \left \lceil \frac{\log \varepsilon}{\log \left(1 - \eta \delta_{2s} \right)} \right \rceil$
\begin{align}
	W\left(\zhat^{(k,m)}\right) &\le \left( \left(1 - \delta_{2s} \lambda_{\max}^{-1}\{Q^T Q\} \right)^\half+ \varepsilon \right) W\left(\hat{z}^{(k,0)}\right),
\label{eq:decreaseWpropGradient}	
\end{align}
where $\hat{z}^{(k,m)}\in \R^{n} \times \R^{p \tau}$ is recursively defined by:
$$\hat{z}^{(k,m+1)} := \hat{z}^{(k,m)} + \eta Q^T \left(Y - Q \hat{z}^{(k,m)}\right).$$
\label{prop:gradient}
\end{proposition}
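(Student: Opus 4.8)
The plan is to track the error $e^{(k,m)} = z^{*} - \hat z^{(k,m)}$ directly. Since $z^{*}$ solves Problem \ref{prob:opt} we have $Y = Q z^{*}$, so the gradient recursion in the statement gives $Y - Q\hat z^{(k,m)} = Q e^{(k,m)}$ and hence $e^{(k,m+1)} = (I - \eta Q^{T}Q)\,e^{(k,m)}$. Unrolling yields the closed form $e^{(k,m)} = (I - \eta Q^{T}Q)^{m} e^{(k,0)}$, so that $W(\hat z^{(k,m)}) = \norm{(I - \eta Q^{T}Q)^{m} e^{(k,0)}}$. The first thing I would record is that $e^{(k,0)} \in \R^{n}\times\S_{2s}$: both $z^{*}_{E}$ and $\hat z^{(k,0)}_{E}$ are cyclic $s$-sparse (the latter by hypothesis 1), and their supports combine into a cyclic $2s$-sparse pattern. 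This is exactly the regularity needed to invoke Propositions \ref{lemma:rip} and \ref{prop:angle}.

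The key idea is to split the iteration operator along the orthogonal decomposition $\R^{n+p\tau} = \ker Q \oplus \im Q^{T}$. Writing $P = I - Q^{+}Q$ for the orthogonal projector onto $\ker Q$ and $P^{\perp} = Q^{+}Q$ for the projector onto the row space, I observe that $Q^{T}Q$ annihilates $\ker Q$, so $(I - \eta Q^{T}Q)$ acts as the identity on $\im P$; consequently $(I-\eta Q^{T}Q)^{m} = P + (I-\eta Q^{T}Q)^{m} P^{\perp}$. Applying this to $e^{(k,0)}$ and using the triangle inequality splits $W(\hat z^{(k,m)})$ into a static term $\norm{P\,e^{(k,0)}}$ and a transient term $\norm{(I-\eta Q^{T}Q)^{m} P^{\perp} e^{(k,0)}}$.

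The static term is handled immediately: $P\,e^{(k,0)} = (I - Q^{+}Q)\,e^{(k,0)}$, and since $e^{(k,0)} \in \R^{n}\times\S_{2s}$, Proposition \ref{prop:angle} bounds it by $(1 - \delta_{2s}\lambda^{-1}_{\max}\{Q^{T}Q\})^{\half}\norm{e^{(k,0)}}$, which is the leading constant in \eqref{eq:decreaseWpropGradient}. For the transient term I would argue that $P^{\perp} e^{(k,0)}$ lies in $\im Q^{T}$, where $Q^{T}Q$ is symmetric positive definite; the step-size hypothesis $\eta < 2\lambda^{-1}_{\max}\{Q^{T}Q\}$ forces each eigenvalue $1 - \eta\mu$ of $(I - \eta Q^{T}Q)$ restricted to that subspace to have modulus strictly below one, so the transient contracts geometrically. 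Bounding the contraction factor by $1 - \eta\delta_{2s}$ and using $\norm{P^{\perp} e^{(k,0)}} \le \norm{e^{(k,0)}}$ gives $\norm{(I-\eta Q^{T}Q)^{m} P^{\perp} e^{(k,0)}} \le (1-\eta\delta_{2s})^{m}\norm{e^{(k,0)}}$, which is at most $\varepsilon\norm{e^{(k,0)}}$ precisely once $m \ge \lceil \log\varepsilon / \log(1-\eta\delta_{2s})\rceil$. Adding the two bounds and recalling $\norm{e^{(k,0)}} = W(\hat z^{(k,0)})$ yields the claim.

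I expect the transient term to be the main obstacle. The clean parts are the invariance of $\ker Q$ under $(I-\eta Q^{T}Q)$ and the direct application of Proposition \ref{prop:angle} to the static term. The delicate point is quantifying the contraction of $(I - \eta Q^{T}Q)$ on the row space by the explicit rate $1 - \eta\delta_{2s}$: one must relate the spectrum of $Q^{T}Q$ on $\im Q^{T}$ to the restricted eigenvalue $\delta_{2s}$ (here the structural fact that the nonzero eigenvalues of $Q^{T}Q$ coincide with those of $QQ^{T} = I + \O\O^{T}$, hence are at least $1 \ge \delta_{2s}$, is useful) and to verify that the step-size window $\eta < 2\lambda^{-1}_{\max}\{Q^{T}Q\}$ keeps the largest-eigenvalue direction from degrading the rate. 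This eigenvalue bookkeeping, rather than any conceptual difficulty, is where the real care is needed.
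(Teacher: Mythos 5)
Your proposal follows the paper's route step for step: the unrolling $e^{(k,m)}=(I-\eta Q^TQ)^m e^{(k,0)}$, the observation that $e^{(k,0)}\in\R^n\times\S_{2s}$, the split of $(I-\eta Q^TQ)^m$ into its action on $\ker Q$ and on the row space, the triangle inequality, Proposition~\ref{prop:angle} for the static term, and the choice of $m$ are all exactly the paper's steps. Indeed, the paper's ``projection error'' $\epsilon(m)=(I-Q^+Q)-(I-\eta Q^TQ)^m$ equals $-(I-\eta Q^TQ)^m Q^+Q$, so your projector decomposition is the paper's identity in disguise, and your transient term $\norm{(I-\eta Q^TQ)^m P^{\perp}e^{(k,0)}}$ is literally $\norm{\epsilon(m)e^{(k,0)}}$. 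The only divergence is how the transient is controlled: the paper invokes Corollary 3 of \cite{Petryshyn1967417} to assert $\norm{\epsilon(m)}\le(1-\eta\delta_{2s})^m$, whereas you attempt a direct spectral argument.

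That is exactly where your proof has a genuine gap. The norm of $(I-\eta Q^TQ)$ restricted to $\im Q^T$ is $\max\{\vert 1-\eta\mu\vert : \mu\in\lambda\{Q^TQ\},\ \mu\neq 0\}$, and since (as you correctly note) the nonzero eigenvalues lie in $[1,\lambda_{\max}\{Q^TQ\}]$, this equals $\max\{1-\eta\mu_{\min}^{+},\ \eta\lambda_{\max}\{Q^TQ\}-1\}$, where $\mu_{\min}^{+}$ is the smallest nonzero eigenvalue. Your fact $\mu_{\min}^{+}\ge 1\ge\delta_{2s}$ controls only the first entry of this maximum. The second entry satisfies $\eta\lambda_{\max}\{Q^TQ\}-1\le 1-\eta\delta_{2s}$ if and only if $\eta\le 2/(\lambda_{\max}\{Q^TQ\}+\delta_{2s})$, which is strictly stronger than hypothesis 2) whenever $\delta_{2s}>0$. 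Concretely, if $\lambda_{\max}\{Q^TQ\}=2$ is attained, $\delta_{2s}=1$, and $\eta=0.99<2\lambda_{\max}^{-1}\{Q^TQ\}$, your claimed contraction factor is $1-\eta\delta_{2s}=0.01$ while the true factor is $\vert 1-2\eta\vert=0.98$; the bound $(1-\eta\delta_{2s})^m$, and with it the explicit threshold $\lceil\log\varepsilon/\log(1-\eta\delta_{2s})\rceil$ in the statement, fails. What survives under hypothesis 2) alone is geometric decay at the rate $\rho=\norm{(I-\eta Q^TQ)Q^+Q}<1$, so inequality~\eqref{eq:decreaseWpropGradient} does hold for all sufficiently large $m$; to close your argument you must either restate the threshold on $m$ in terms of $\rho$, or strengthen the step-size condition (e.g., $\eta\le\lambda_{\max}^{-1}\{Q^TQ\}$ suffices, since then $0\le 1-\eta\mu\le 1-\eta\delta_{2s}$ for every nonzero $\mu$). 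To be fair, the same tension is hidden in the paper's appeal to \cite{Petryshyn1967417}, since no spectral argument can produce the rate $(1-\eta\delta_{2s})^m$ on the whole window $0<\eta<2\lambda_{\max}^{-1}\{Q^TQ\}$; but since you chose to prove this step rather than cite it, the ``bookkeeping'' you explicitly deferred is precisely the step that does not go through as stated.
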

\begin{proof} The Lyapunov candidate function $W\left(\hat{z}^{(k,m)}\right)$, after taking
one gradient descent step, can be written as follows:
\begin{align*}
	W\left(\hat{z}^{(k,m)}\right) 	&= \norm{z^* - \hat{z}^{(k,m)}}
						= \norm{z^* - \hat{z}^{(k,m-1)} - \eta Q^T \left(Y - Q \hat{z}^{(k,m-1)}\right)} \\
						&= \norm{z^* - \hat{z}^{(k,m-1)} - \eta  Q^T \left(Q z^* - Q \hat{z}^{(k,m-1)} \right)}\\
						&= \norm{e^{(k,m-1)} - \eta  Q^T Q e^{(k,m-1)}}.
\end{align*}
Recursively extending the previous analysis  to $m$ steps we obtain:
\begin{align}					
	W\left(\hat{z}^{(k,m)}\right)	&= \norm{ (I - \eta Q^T Q)^m  e^{(k,0)} }.
\label{eq:Wintermediate}	
\end{align}	
Now define the projection error $\epsilon(m)$ as:
\begin{align}
\epsilon(m) = (I - Q^+Q) - (I - \eta Q^T Q)^m 
\label{eq:epsilonInterm}
\end{align}	 
where $Q^+$ is the Moore-Penrose pseudo inverse of $Q$.
It follows from Corollary 3 in \cite{Petryshyn1967417} that:
$$ \norm{\epsilon(m)} \le \left(1 - \eta \delta_{2s} \right)^m.$$
Given any $\varepsilon \in \R^{+}$, we conclude that in no more than:
$$ \left \lceil \frac{\log \varepsilon}{\log \left(1 - \eta \delta_{2s} \right)} \right \rceil$$
steps, the projection error $\norm{\epsilon(m)}$ satisfies $\norm{\epsilon(m)} \le \varepsilon$.
Hence:
\begin{align*}	
W\left(\hat{z}^{(k,m)}\right)
			&\stackrel{(a)}{=}  \norm{ (I - Q^+Q) e^{(k,0)} - \epsilon(m) e^{(k,0)}} \\
			&\stackrel{(b)}{\le} \norm{(I - Q^+Q) e^{(k,0)}} + \norm{\epsilon(m) e^{(k,0)}}\\
			&\stackrel{(c)}{\le} \left(\left(1 - \delta_{2s} \lambda_{\max}^{-1}\{Q^T Q\} \right)^\half+ \norm{\epsilon(m)}\right) \norm{e^{(k,0)}}\\
			&\le \left( \left(1 - \delta_{2s} \lambda_{\max}^{-1}\{Q^T Q\} \right)^\half+ \varepsilon \right) W\left(\hat{z}^{(k,0)}\right)
\end{align*}
where the equality: $(a)$ follows by substituting~\eqref{eq:epsilonInterm} in~\eqref{eq:Wintermediate}; $(b)$ follows from the triangular inequality; and $(c)$ follows from Proposition \ref{prop:angle} and the first assumption which guarantees that $\zhat_E^{(k,0)}$ is $s$-sparse and hence $z^*_E-\zhat_E^{(k,0)}$ is at most $2s$-sparse. 
\end{proof}
\begin{remark}
It follows from the previous analysis that we can replace the inner loop in Algorithm \ref{alg:etpg} with a one step projection of $\hat{z}$ on the kernel of $Q$, that is, Algorithm \ref{alg:etpg} can be simplified to the alternation between the following two steps:
\begin{align*}
\hat{z}^{(k+1)} &:= \hat{z}_\Pi^{(k)} + Q^+\left(Y - Q\hat{z}_\Pi^{(k)} \right),\\
\hat{z}_\Pi^{(k+1)} &:= \Pi\left(\hat{z}^{(k+1)}\right).
\end{align*}
However, since computing the pseudo inverse matrix $Q^+$ can, in general, suffer from numerical issues, we argue that using the gradient descent algorithm (or any other recursive implementation that computes $Q^+$, e.g., Newton method,  conjugate gradients, ...) is preferable.
\label{rem:2}
\end{remark}

\subsubsection{Termination of the ETPG's Inner Loop}
In the following result, we establish sufficient conditions for termination of the inner loop.
\begin{proposition}
Let the linear control system defined by~(\ref{eq:sys_state})
and~(\ref{eq:sys_out}) be $2s$-sparse observable with $2s$-restricted eigenvalue $\delta_{2s}$. If the following holds:
\begin{enumerate}
\item $0 < \eta < 2 \lambda_{\max}^{-1}\{Q^T Q\}$,
\item 
$\delta_{2s} > \frac{4}{9}{\lambda_{\max}\{Q^TQ\}}$
\end{enumerate}
then, after no more than:
\begin{equation}
\label{Eq:MSteps}
\left \lceil \frac{\log \frac{3}{2}\left(\delta_{2s}{\lambda_{\max}^{-1}\left\{Q^TQ\right\}}\right)^\half - 1}{\log \left(1 - \eta \delta_{2s} \right)} \right \rceil
\end{equation}
inner loop iterations, the inner loop condition $V\left(\hat{z}_\Pi^{(k)}\right) < V\left(\hat{z}_\Pi^{(k-1)}\right)$ is satisfied.
\label{prop:termination}
\end{proposition}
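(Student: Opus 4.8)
The plan is to recast the inner-loop termination criterion as a strict Lyapunov decrease and then to balance two competing effects: the contraction produced by the gradient sweep against the inflation produced by the single projection. Since $Y=Qz^*$, I would first rewrite $V(\hat{z})=\half\norm{Q(z^*-\hat{z})}^2$, so that the exit condition $V_{temp}<V(\hat{z}_\Pi^{(k-1)})$ is precisely $V(\Pi(\hat{z}^{(k,m)}))<V(\hat{z}^{(k,0)})$ with $\hat{z}^{(k,0)}=\hat{z}_\Pi^{(k-1)}$. Because $\hat{z}^{(k,0)}$ has an $s$-sparse $E$-component, the error $e^{(k,0)}=z^*-\hat{z}^{(k,0)}$ is cyclic $2s$-sparse, so Proposition \ref{lemma:rip} supplies the lower bound $V(\hat{z}^{(k,0)})\ge\half\delta_{2s}\norm{e^{(k,0)}}^2=\half\delta_{2s}\,W(\hat{z}^{(k,0)})^2$. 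The outer guard guarantees $V(\hat{z}_\Pi^{(k-1)})>0$, hence $W(\hat{z}^{(k,0)})>0$, which will let me divide by it later.

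To bound the left-hand side I would invoke Proposition \ref{prop:gradient}, whose hypotheses are exactly assumptions (1)--(2): for any accuracy $\varepsilon$, after $\lceil\log\varepsilon/\log(1-\eta\delta_{2s})\rceil$ gradient steps we have $W(\hat{z}^{(k,m)})\le(\rho+\varepsilon)\,W(\hat{z}^{(k,0)})$, where $\rho=(1-\delta_{2s}\lambda_{\max}^{-1}\{Q^TQ\})^{\half}$. Feeding this into the projection estimate $W\circ\Pi\le 2W$ of Proposition \ref{prop:sparsify} together with $V(\Pi(\hat{z}^{(k,m)}))\le\half\lambda_{\max}\{Q^TQ\}\,W(\Pi(\hat{z}^{(k,m)}))^2$ yields an upper bound of the form $c\,\lambda_{\max}\{Q^TQ\}\,(\rho+\varepsilon)^2\,W(\hat{z}^{(k,0)})^2$.

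Finally I would require the upper bound to be strictly below the lower bound. Dividing through by $W(\hat{z}^{(k,0)})^2$ reduces everything to a scalar inequality in $t:=\delta_{2s}\lambda_{\max}^{-1}\{Q^TQ\}$, $\rho=(1-t)^{\half}$ and $\varepsilon$. Matching the step count \eqref{Eq:MSteps} in the statement requires this inequality to reduce to the admissible range $\varepsilon<\tfrac32 t^{\half}-1$; its right-hand side is positive exactly when $t>\tfrac49$, which is assumption (2), and substituting $\varepsilon=\tfrac32 t^{\half}-1$ into the step count of Proposition \ref{prop:gradient} returns \eqref{Eq:MSteps}. Assumption (1) is what validates the gradient contraction of Proposition \ref{prop:gradient} to begin with.

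The delicate step is the second one: quantifying the projection inflation against the gradient contraction sharply enough to land on the constant $\tfrac49$ rather than a weaker threshold. The naive chain---$W\circ\Pi\le 2W$ followed by passage to $V$ through $\lambda_{\max}\{Q^TQ\}$---wastes a factor and only certifies termination for $\delta_{2s}$ strictly larger than $\tfrac49\lambda_{\max}\{Q^TQ\}$. To recover the stated constant I would instead split $z^*-\Pi(\hat{z}^{(k,m)})$ into the gradient residual $e^{(k,m)}$ and the projection displacement $\Pi(\hat{z}^{(k,m)})-\hat{z}^{(k,m)}$, exploit that $Q e^{(k,m)}=-Q\,\epsilon(m)\,e^{(k,0)}$ from \eqref{eq:epsilonInterm} is of order $\varepsilon$ while the displacement is controlled by $\norm{e^{(k,m)}}$ via the optimality of $\Pi$, and estimate $\norm{Q(z^*-\Pi(\hat{z}^{(k,m)}))}$ from these two pieces before comparing with the restricted-eigenvalue lower bound on $V(\hat{z}^{(k,0)})$. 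Getting these contributions to recombine with precisely the constant $\tfrac49$ is the crux of the argument.
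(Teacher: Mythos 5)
Your proposal tracks the paper's own argument almost step for step: the same recasting of the termination test via the two--sided bound $\frac{\delta_{2s}}{2}W^2 \le V \le \frac{\lambda_{\max}\{Q^TQ\}}{2}W^2$ (valid because $e^{(k,0)}$ is cyclic $2s$-sparse), the same chain combining Proposition~\ref{prop:sparsify} and Proposition~\ref{prop:gradient} to obtain $W\big(\hat{z}_\Pi^{(k)}\big) \le 2\big(\big(1-\delta_{2s}\lambda_{\max}^{-1}\{Q^TQ\}\big)^\half+\varepsilon\big)\,W\big(\hat{z}_\Pi^{(k-1)}\big)$, and the same tuning $\varepsilon < \frac{3}{2}\big(\delta_{2s}\lambda_{\max}^{-1}\{Q^TQ\}\big)^\half-1$ that reproduces the step count~\eqref{Eq:MSteps} and whose positivity is exactly assumption (2). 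The gap is that you never prove the decisive scalar inequality: you explicitly defer ``getting these contributions to recombine with precisely the constant $\frac{4}{9}$'' as ``the crux,'' and the refined decomposition you sketch is left unexecuted. As written, the proposal therefore does not prove the proposition. Writing $t := \delta_{2s}\lambda_{\max}^{-1}\{Q^TQ\}$, the naive chain requires $2(1-t)^\half < t^\half$, i.e.\ $4(1-t)<t$, i.e.\ $t > \frac{4}{5}$ --- not $t>\frac{4}{9}$ --- so under assumption (2) alone the argument you actually carried out does not close.

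That said, your diagnosis of where the difficulty sits is accurate, and it exposes a defect in the paper's own proof of this proposition. The paper asserts that $2\big((1-t)^\half+\varepsilon\big) < t^\half$ follows from $t>\frac{4}{9}$, but its displayed chain of implications proves $2\big(1-t^\half\big)+2\varepsilon < t^\half$ instead: it silently replaces $(1-t)^\half$ by $1-t^\half$. Since $(1-t)^\half \ge 1-t^\half$ for all $t\in[0,1]$, the substitution goes the wrong way, and the paper's argument as written only certifies termination for $t>\frac{4}{5}$. So the step you flag as the crux is precisely the step the paper gets wrong. Your proposed repair --- using that $Qe^{(k,m)} = -Q\,\epsilon(m)\,e^{(k,0)}$ from~\eqref{eq:epsilonInterm} is of order $\varepsilon$, and that the projection displacement lives entirely in the $E$-coordinates, where $Q=\matrix{\O & I}$ acts as the identity and hence no factor $\lambda_{\max}^{\half}\{Q^TQ\}$ is incurred --- does sharpen the threshold, to roughly $\delta_{2s} > \lambda_{\max}\{Q^TQ\}/\big(1+\lambda_{\max}\{Q^TQ\}\big)$, but even this does not reach $\frac{4}{9}$ unconditionally (it does only when $\lambda_{\max}\{Q^TQ\}\ge\frac{5}{4}$). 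In short: your proposal is incomplete at its central step, but the incompleteness is honest, and closing it at the stated constant $\frac{4}{9}$ requires an argument that neither you nor the paper supplies.
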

\begin{proof}
Before we start, we recall that $V\big(\hat{z}^{(k)}_\Pi\big) = V\left(\hat{z}^{(k,0)}\right)= \half \norm{Y - Q\hat{z}^{(k,0)}}^2 = \half \norm{Q e^{(k,0)}}^2$. It follows from the definition of $e^{(k,0)} = z^* - \Pi(z^{(k-1,m)})$ that $e^{(k,0)}_E$ is at most $2s$-sparse. Accordingly, inequality \eqref{eq:REV} holds as follows:
\begin{align}
\frac{\delta_{2s}}{2} W^2\left(\hat{z}^{(k)}_\Pi\right) \le V\left(\hat{z}^{(k)}_\Pi \right) \le \frac{\lambda_{\max}\{Q^TQ\} }{2} W^2\left(\hat{z}^{(k)}_\Pi \right)
\label{eq:V_W}
\end{align}
Hence, if we can prove that applying $\left \lceil \frac{\log \frac{3}{2}\left(\delta_{2s}{\lambda_{\max}^{-1}\left\{Q^TQ\right\}}\right)^\half - 1}{\log \left(1 - \eta \delta_{2s} \right)} \right \rceil$ gradient descent steps followed by a projection step implies that:
\begin{align}
 W^2\left(\hat{z}_\Pi^{(k)}\right) < \delta_{2s} \lambda_{\max}^{-1}\{Q^TQ\}  W^2\left(\hat{z}_\Pi^{(k-1)}\right)
\label{eq:W_W}
\end{align}
holds, we can combine \eqref{eq:W_W} with \eqref{eq:V_W} to obtain:
$$ V\left(\hat{z}_\Pi^{(k)}\right) \le \frac{\lambda_{\max}\{Q^TQ\}}{2} W^2\left(\hat{z}_\Pi^{(k)}\right) < \frac{\delta_{2s}}{2}  W^2\left(\hat{z}_\Pi^{(k-1)}\right) \le V\left(\hat{z}_\Pi^{(k-1)}\right) $$
and conclude that the inner loop condition is satisfied.
Therefore, to finalize the proof we need to show that inequality \eqref{eq:W_W} holds. This follows directly from:
\begin{align*}
W\left(\hat{z}_\Pi^{(k)}\right) &=
W\left(\hat{z}^{(k,0)}\right) 
			= \norm{z^* - \Pi(\hat{z}^{(k-1,m)})} 
			\stackrel{(a)}{\le} 2W\left(\hat{z}^{(k-1,m)}\right)
			\\&\stackrel{(b)}{\le} 2\left( \left(1 - \delta_{2s} \lambda_{\max}^{-1}\{Q^T Q\} \right)^\half+ \varepsilon \right) W\left(\hat{z}_\Pi^{(k-1)}\right)
\end{align*}
where the inequality $(a)$ follows from Proposition \ref{prop:sparsify} while inequality $(b)$ follows from Proposition \ref{prop:gradient}. To conclude the result, we need to show that the factor $2\left( \left(1 - \delta_{2s} \lambda_{\max}^{-1}\{Q^T Q\} \right)^\half+ \varepsilon \right)$ is strictly less than $(\delta_{2s} \lambda_{\max}^{-1}\{Q^TQ\})^\half$. But this follows directly from assumption 2):
\begin{align*}
\delta_{2s}{\lambda_{\max}^{-1}\left\{Q^TQ\right\}} > \frac{4}{9}
& \Rightarrow 3 \left(\delta_{2s}{\lambda_{\max}^{-1}\left\{Q^TQ\right\}}\right)^\half > 2
\\& \Rightarrow 2 \left(1 -  \left(\delta_{2s}{\lambda_{\max}^{-1}\left\{Q^TQ\right\}}\right)^\half \right) < \left(\delta_{2s}{\lambda_{\max}^{-1}\left\{Q^TQ\right\}}\right)^\half
\\& \Rightarrow 2 \left(1 -  \left(\delta_{2s}{\lambda_{\max}^{-1}\left\{Q^TQ\right\}}\right)^\half \right) + 2 \varepsilon  < \left(\delta_{2s}{\lambda_{\max}^{-1}\left\{Q^TQ\right\}}\right)^\half
\end{align*}
for any $\varepsilon$ satisfying: 
$$2 \varepsilon <  \left(\delta_{2s}{\lambda_{\max}^{-1}\left\{Q^TQ\right\}}\right)^\half - 2 \left(1 -  \left(\delta_{2s}{\lambda_{\max}^{-1}\left\{Q^TQ\right\}}\right)^\half \right) = 3 \left(\delta_{2s}{\lambda_{\max}^{-1}\left\{Q^TQ\right\}}\right)^\half - 2.$$
\end{proof}

\subsection{Stability of ETPG Algorithm}
Using the previous results, we can now show  convergence of the ETPG algorithm.
\begin{proof}[Proof of Theorem \ref{th:stability}]
Convergence of the algorithm, in the Lyapunov sense, follows directly from the termination of the inner loop shown in Proposition \ref{prop:termination}. Consider the following sequence:
\begin{align*}
Y^{(k,m)} = \sqrt{\frac{2}{\delta_{2s}}V\left(\zhat^{(k,0)}\right)} = \sqrt{\frac{2}{\delta_{2s}} \half \norm{Q e^{(k,0)}}^2}
\end{align*}

We first show that this sequence forms an upper bound for $W\left(\hat{z}^{(k,m)}\right)$.
For $m = 0$, it follows from the definition of $e^{(k,0)} = z^* - \hat{z}^{(k,0)} = z^* - \Pi(\hat{z}^{(k-1,m)})$ that $e_E^{(k,0)}$ is at most $2s$-sparse. Accordingly inequality~\eqref{eq:V_W} holds as follows:
\begin{align}
	\frac{\delta_{2s}}{2} W^2\left(\hat{z}^{(k,0)}\right) \le V\left(\hat{z}^{(k,0)}\right) \Rightarrow W\left(\hat{z}^{(k,0)}\right) \le Y^{(k,0)}
	\label{eq:bounds0}
\end{align}
Note that upper bound \eqref{eq:bounds0} is satisfied only when $\ahat$ is cyclic $s$-sparse. Thus it is only applicable after applying the projection operator, i.e., when $m=0$.
We now extend this bound for $m > 0$ as follows:
\begin{align}
	W\left(\zhat^{(k,m)}\right) < W\left(\zhat^{(k,0)}\right) = Y^{(k,0)} = Y^{(k,m)},
\label{eq:boundsm}	
\end{align}
where the inequality follows from Proposition \ref{prop:gradient} along with assumption 2).

 Since $0 \le W\left(\zhat^{(k,m)}\right) \le Y^{(k,m)}$, and by the triggering condition $V\left({\hat{z}_\Pi^{(k+1)}}\right) < V\left(\hat{z}_\Pi^{(k)}\right)$ we have:
  $$\lim_{k \rightarrow \infty} Y^{(k,m)} = \lim_{k \rightarrow \infty}
\sqrt{\frac{2}{\delta_{2s}} V\left(\zhat^{(k,0)}\right)} = 0$$
 and we conclude:
 $$ \lim_{k \rightarrow
\infty} W\left(\zhat^{(k,0)}\right) = 0.$$
\end{proof}

\section{An Event-Triggered Projected Luenberger Observer}
\label{sec:luenberger}

In this section we describe a solution to Problem \ref{prob:observer} obtained by rendering the ETPG algorithm recursive.
\subsection{The Algorithm}
We start again with the linear dynamical system defined by~(\ref{eq:sys_state})
and~(\ref{eq:sys_out}). The dynamics of $z(t)
=(x(t-\tau+1), E(t)) \in \R^{n} \times \R^{p\tau}$ can be written, using the equality $a(t)=y(t)-Cx(t)$, as follows:
\begin{align*}
	\matrix{
	x(t-\tau+1) \\ a(t-\tau+1) \\ \vdots \\ a(t)
	}
	 &= 
	\matrix{A & 0 & 0 & \hdots & 0\\ 0 & 0 & I & \hdots & 0 \\ \vdots \\ -C A^{\tau} & 0 & 0
	& \hdots & 0}
	\matrix{
	x(t-\tau) \\ a(t-\tau) \\ \vdots \\ a(t-1)
	} + \\
	&
	\matrix{
	B & 0 & \hdots & 0 \\ 0 & 0 & \hdots & 0  \\ \vdots & &
	& 
	\vdots
	\\
	-CA^{\tau-3}B & -CA^{\tau-2}B & \hdots & -CB }
	\matrix{ 
	u(t-\tau) \\ u(t-\tau+1) \\ \vdots \\ u(t-1)
	} + \matrix{0 \\ 0 \\ \vdots \\ I} y(t) ,
\end{align*}
\begin{align*}	
	\matrix{
	y(t-\tau) \\ y(t-\tau+1) \\ \vdots \\ y(t-1)
	} = & 
	\matrix{
	C \\ CA \\ \vdots \\ CA^{\tau}
	}
	x(t-\tau)
	+ 
	\matrix{
	a(t-\tau) \\ a(t-\tau+1) \\ \vdots \\ a(t-1)
	} + F U(t-1)
\end{align*}
or in the more compact form:
\begin{align*}
	z(t)   &= \overline{A} z(t-1) + \overline{B} \bar{u}(t-1) \\
	Y(t-1) 	&= Q z(t-1),
\end{align*}
where:
\begin{align*}
	\overline{A}&=
	\left[ \begin{array}{cccc}
	A & 0 &  \hdots & 0\\ \hline 0 & I & \hdots & 0 \\ 
	\vdots & & \ddots &\\
	0 & 0 & \hdots & I \\
	-C A^{\tau} & 0 & \hdots & 0
	\end{array} \right],
	\overline{B} &= \left[ \begin{array}{cccc|c}
	B & 0 & \hdots & 0 & 0 \\ \hline  0 & 0 & \hdots & 0 & 0  \\ \vdots & &
	& 
	\vdots & \vdots
	\\
	0 & 0 & \hdots & 0 & 0\\
	-CA^{\tau-3}B & -CA^{\tau-2}B & \hdots & -CB & I 
	\end{array} \right],
\end{align*}
$Y(t-1) = \tilde{Y}(t-1) - FU(t-1)$, $\bar{u}(t-1) = \matrix{ U(t-1) \\
y(t) }$, and $z(t), \tilde{Y}(t), U(t), Q, F$ are as defined as in Section \ref{sec:problem}.

The Event-Triggered Projected Luenberger (ETPL) Observer consist of the iteration of the following two steps:

\paragraph{Time Update}
Starting from an estimate $\zhat_\Pi(t-1) = (\xhat(t-1),\ahat_\Pi(t-1))$ with $\ahat_\Pi(t-1)$ $s$-sparse, we use the dynamics to update the previous estimate: 
\begin{align*}
	\zhat(t) = \overline{A} \zhat_\Pi(t-1) + \overline{B} \overline{u}(t-1).
\end{align*}
This may result in an increase in the value of the Lyapunov candidate function $V$.

\paragraph{Event-Triggered Projected Luenberger (Measurement) Update}
In this step, we alternate between applying the Luenberger update step:
\begin{align*}
	\zhat^{(m+1)}(t) &= \zhat^{(m)}(t) + L \left(Y(t) - Q \zhat^{(m)}(t) \right),
\end{align*}
multiple times followed by the projection operator $\Pi$ once:
\begin{align*}
	\zhat_\Pi(t) &= \Pi \left( \zhat^{(m)}(t)\right).
\end{align*}
It follows from the proof of Theorem~\ref{th:stability} that alternating between multiple Luenberger updates (which is the generalization of the gradient descent step when $L = Q^T \Sigma$ for some positive definite matrix $\Sigma$) and projection steps forces a decrease of the Lyapunov candidate function $V(\hat{z}(t)) = \half \norm{Y(t) - Q \hat{z}(t)}^2$. In order to compensate for the increase introduced by the time update step, we need to ensure that $V$ decreases along the execution of the algorithm. Hence, multiple Luenberger updates and projection steps are performed for each time-update step.  Using the same triggering technique, by monitoring the evolution of $V$, the algorithm can  determine when the decrease in $V$ caused by the Luenberger update/projection steps compensates the increase caused by the time-update. 
This sequence of steps results in Algorithm \ref{alg:etpl}.

\begin{algorithm}
\caption{Event Triggered Projected Luenberger Observer}
\begin{algorithmic}
\STATE \textbf{a)} Time Update:
\STATE $\zhat(t) = \overline{A} \zhat(t-1) + \overline{B} \overline{u}(t-1)$;
\STATE \textbf{b)} Event-Triggered Projected Luenberger (Measurement) Update: 
\WHILE{$V(\zhat_\Pi(t)) \ge V(\zhat_\Pi(t-1))$}
\STATE reset $m: = 0$, $\zhat^{(0)}(t) = \zhat_\Pi(t) =  \Pi(\zhat(t))$;
\STATE $V_{temp}: = V(\zhat_\Pi(t))$;
\WHILE {$V_{temp} \ge V(\zhat_\Pi(t))$}	
	\STATE $\zhat^{(m+1)}(t) := \zhat^{(m)}(t) + L \left(Y(t) - Q \zhat^{(m)}(t) \right)$;
	\STATE $V_{temp} := V(\Pi \left(\zhat^{(m+1)}(t) \right))$;
	\STATE $m := m+1$;
\ENDWHILE
\STATE $\zhat(t) := \zhat^{(m)}(t)$;
\ENDWHILE
\end{algorithmic}
\label{alg:etpl}
\end{algorithm}

To make the connection with the standard Luenberger observer clearer, assume that only one Luenberger/projection update is required per time update. In this case, the ETPL observer can be written as:
\begin{align*}
	\zhat(t) &= \overline{A} \zhat_\Pi(t-1) + \overline{B} \overline{u}(t-1) \\&\qquad \qquad \qquad + L'\left (Y(t-1) - Q \zhat_\Pi(t-1)\right)\\
	\zhat_\Pi(t) &= \Pi ( \zhat(t) )
\end{align*}
which has the form of a standard Luenberger observer with gain $L' = A \; L$ along with the projection $\Pi$ step.



\begin{remark}
It follows from Remark \ref{rem:2} that we can replace the inner loop in Algorithm \ref{alg:etpl} with one update step if we fix the Luenberger gain $L$ to be $L = Q^{+}$.
\end{remark}

\subsection{Convergence of the ETPL Observer}
In this subsection we discuss the convergence and performance of the ETPL observer. The main result of this subsection is stated in the following theorem.
\begin{theorem}{\textbf{(Convergence of the ETPL observer)}}
Let the linear control system defined by \\ \eqref{eq:sys_state}
and \eqref{eq:sys_out} be 2s-observable system with $2s$-restricted eigenvalue
$\delta_{2s}$. If the following condition holds:
$$\delta_{2s} > \frac{4}{9}{\lambda_{\max}\{Q^TQ\}}$$ 
then the dynamical system defined by the ETPL observer is a solution to Problem \ref{prob:observer} whenever $L = Q^T \Sigma$ for any positive definite weighting matrix  $\Sigma$ satisfying $\lambda_{\max} \{ \Sigma\} < \lambda_{\max}^{-1}\{Q^TQ \}$. 
\label{th:Luenberger}	
\end{theorem}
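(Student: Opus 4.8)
The plan is to reproduce the Lyapunov argument behind Theorem~\ref{th:stability}, but now accounting for the time-varying target $z^*(t)$ and the time-update map $\overline{A}$. First I would record the basic reductions. Since the true pair reproduces the data, $Y(t) = Q z^*(t)$, the Lyapunov candidate $V(\zhat(t)) = \half\norm{Y(t) - Q\zhat(t)}^2$ equals $\half\norm{Q e(t)}^2$ with $e(t) = z^*(t) - \zhat(t)$, and whenever the attack component of $e(t)$ is cyclic $2s$-sparse (which holds right after any projection, because the attack parts of $z^*(t)$ and of $\zhat_\Pi(t)$ are each cyclic $s$-sparse) the restricted-eigenvalue sandwich $\frac{\delta_{2s}}{2}W^2(\zhat(t)) \le V(\zhat(t)) \le \frac{\lambda_{\max}\{Q^TQ\}}{2}W^2(\zhat(t))$ holds, where $W(\zhat(t)) = \norm{e(t)}$. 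I would also note that $z^*(t)$ obeys the same recursion $z^*(t) = \overline{A}z^*(t-1) + \overline{B}\overline{u}(t-1)$ used by the time-update step, so the time-update error is exactly $z^*(t) - \zhat(t) = \overline{A}\,(z^*(t-1) - \zhat_\Pi(t-1))$; in particular the time update may inflate the error by at most $\norm{\overline{A}}$.

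The second step is to lift the gradient-step machinery of Propositions~\ref{prop:angle}--\ref{prop:termination} to the weighted Luenberger update $L = Q^T\Sigma$. The measurement-update error recursion becomes $e^{(m+1)} = (I - Q^T\Sigma Q)e^{(m)}$, and the hypothesis $\lambda_{\max}\{\Sigma\} < \lambda_{\max}^{-1}\{Q^TQ\}$ guarantees that $I - Q^T\Sigma Q$ is positive semidefinite with eigenvalues in $[0,1]$, so that $(I - Q^T\Sigma Q)^m$ still converges to the kernel projector $I - Q^+Q$ and the Petryshyn-type estimate of Proposition~\ref{prop:gradient} goes through verbatim (with $\eta Q^TQ$ replaced by $Q^T\Sigma Q$ and $\eta\delta_{2s}$ by $\lambda_{\min}\{\Sigma\}\delta_{2s}$ in the termination rate). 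Under $\delta_{2s} > \frac{4}{9}\lambda_{\max}\{Q^TQ\}$, the computation in Proposition~\ref{prop:termination} then shows that each round of several Luenberger updates followed by one projection contracts $W$ by a fixed factor strictly below $\sqrt{\rho}$, where $\rho = \delta_{2s}\lambda_{\max}^{-1}\{Q^TQ\} < 1$; crucially this factor is $\Sigma$-independent, since the $\norm{(I-Q^+Q)z}$ bound of Proposition~\ref{prop:angle} does not see the step weighting.

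Third, I would argue that the event-triggered outer loop of the measurement update terminates at every time $t$: because the rounds contract $W$ (hence $V$) geometrically toward the unique minimizer $z^*(t)$, unique by Theorem~\ref{th:injectivity}, the value $V(\zhat_\Pi(t))$ can be driven below the threshold $V(\zhat_\Pi(t-1))$ in finitely many rounds whenever the latter is positive (and if it is already zero the state has been reconstructed). Finally, for the convergence claim I would combine the uniform per-round contraction $\sqrt{\rho} < 1$ with the bounded time-update expansion $\norm{\overline{A}}$: the triggering condition forces $\{V(\zhat_\Pi(t))\}_t$ to be strictly decreasing and nonnegative, and because each time step runs at least enough rounds to overtake $\norm{\overline{A}}$ while each round contracts by the fixed factor, the sequence is squeezed to zero; the lower restricted-eigenvalue bound then yields $W(\zhat_\Pi(t)) = \norm{e(t)} \to 0$, i.e.\ $\lim_{t\to\infty}(z^*(t) - \zhat(t)) = 0$, which is exactly the requirement of Problem~\ref{prob:observer}.

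The step I expect to be the main obstacle is this last one. Unlike in Theorem~\ref{th:stability}, the target $z^*(t)$ moves and $\overline{A}$ may have norm exceeding one, so ``strictly decreasing $V$'' does not automatically give ``$V \to 0$''. Making this rigorous requires showing that the measurement update always contracts by strictly more than the time update expands, \emph{uniformly} in $t$; the key leverage is the $t$-independent geometric factor furnished by the generalized Proposition~\ref{prop:termination} together with the uniqueness of the minimizer from Theorem~\ref{th:injectivity}.
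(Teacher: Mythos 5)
Your proposal follows essentially the same route as the paper's own proof: the paper likewise reuses the ETPG machinery (Propositions \ref{prop:angle}--\ref{prop:termination}) for the repeated Luenberger-update/projection rounds, invokes Proposition \ref{prop:timeupdate} to bound the $\norm{\overline{A}}$ expansion caused by the time update, and concludes convergence via the event-triggering condition exactly as in Theorem \ref{th:stability}. If anything, you are more thorough than the paper, which neither works out the generalization of the gradient-step propositions to $L = Q^T\Sigma$ (it only asserts it parenthetically) nor confronts the moving-target/expansion issue you flag in your final paragraph; the paper simply states that termination of the measurement-update loop plus ``the same argument'' as Theorem \ref{th:stability} yields $W\left(\zhat_\Pi(t)\right) \to 0$.
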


The proof of Theorem \ref{th:Luenberger} is based on the Lyapunov candidate function:
\begin{align*}
	W(t) = \norm{z(t) - \zhat(t)} = \norm{e(t)},
\end{align*}
and follows the exact same argument used in the proof of Theorem \ref{th:stability} with the exception of the need to account for an increase in $V$ caused by the time update step. Such increase is compensated by applying the Luenberger update loop multiple times.  This increase is described in the next result.

\subsection{Effect of Time Update}
\begin{proposition}
Consider the linear control system defined by~(\ref{eq:sys_state})
and~(\ref{eq:sys_out}). The following inequality holds:
\begin{align*}
	W(\zhat(t)) &\le \norm{\overline{A}} W(\zhat(t-1)),
\end{align*}
whenever $\hat{z}(t)\in \R^{n} \times \R^{p \tau}$ and $\hat{z}(t-1)\in \R^{n} \times \R^{p \tau}$ are related by: 
$$\zhat(t) = \overline{A} \zhat(t-1) + \overline{B} \overline{u}(t-1).$$
\label{prop:timeupdate}
\end{proposition}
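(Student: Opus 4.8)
The plan is to exploit the fact that the true augmented state $z(t)$ obeys \emph{exactly} the same affine recursion as the estimate. Recall from the derivation of the compact form that the true state satisfies $z(t) = \overline{A} z(t-1) + \overline{B}\,\overline{u}(t-1)$, driven by the very same input $\overline{u}(t-1)$ that appears in the hypothesized estimate update $\zhat(t) = \overline{A}\zhat(t-1) + \overline{B}\,\overline{u}(t-1)$. Since $W(\zhat(t)) = \norm{z(t) - \zhat(t)} = \norm{e(t)}$, the first step is simply to subtract the two recursions and observe that the forcing term $\overline{B}\,\overline{u}(t-1)$ cancels identically, leaving a purely homogeneous error dynamics.

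Concretely, I would write
\begin{align*}
e(t) = z(t) - \zhat(t) = \left(\overline{A} z(t-1) + \overline{B}\,\overline{u}(t-1)\right) - \left(\overline{A}\zhat(t-1) + \overline{B}\,\overline{u}(t-1)\right) = \overline{A}\,e(t-1).
\end{align*}
From here the conclusion follows immediately by submultiplicativity of the induced $2$-norm:
\begin{align*}
W(\zhat(t)) = \norm{e(t)} = \norm{\overline{A}\,e(t-1)} \le \norm{\overline{A}}\,\norm{e(t-1)} = \norm{\overline{A}}\,W(\zhat(t-1)).
\end{align*}

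There is no genuine obstacle here: the entire content of the result is the cancellation of the input term, which reduces the affine time update to a linear map $\overline{A}$ acting on the error, after which the bound is a one-line application of the definition of the induced matrix norm. The only point requiring a modicum of care is justifying that $z(t)$ truly satisfies the stated recursion with the identical $\overline{u}(t-1)$ as the estimate; this is exactly the compact form established earlier in this section when rewriting the dynamics of $z(t)$, so I would invoke that derivation rather than re-deriving it. The resulting factor $\norm{\overline{A}}$ quantifies precisely the worst-case amplification of the estimation error introduced by the time update, which is what the subsequent Luenberger/projection loop must compensate.
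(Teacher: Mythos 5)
Your proof is correct and follows exactly the same route as the paper: substitute the identical affine recursion for the true state $z(t)$ and the estimate $\zhat(t)$, cancel the common input term $\overline{B}\,\overline{u}(t-1)$ to obtain $e(t) = \overline{A}\,e(t-1)$, and conclude via the induced-norm bound $\norm{\overline{A}\,e(t-1)} \le \norm{\overline{A}}\,\norm{e(t-1)}$. The paper's proof is this same computation written in a single chain of equalities, so there is nothing to add or correct.
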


\begin{proof}
The Lyapunov candidate function $W(\zhat(t))$ after applying the time update
step, can be written as follows:
\begin{align*}
	W(\zhat(t)) &= \norm{z^*(t) - \zhat(t)}
				= \norm{\overline{A} z^*(t-1) + \overline{B} \overline{u}(t-1) -
				\overline{A} \zhat(t-1) - \overline{B}
				\overline{u}(t-1)} \\
				&= \norm{\overline{A} (z^*(t-1) - \zhat(t-1))}
				\le
				\norm{\overline{A}} W(\zhat(t-1)).
\end{align*}
\end{proof}

\begin{proof}[Proof of Theorem \ref{th:Luenberger}]
First, we note that the Luenberger update loop in Algorithm \ref{alg:etpl} is identical to the outer loop of Algorithm \ref{alg:etpg} with the loop guard $V\left(\hat{z}_\Pi^{(k)}\right) < 0 $ (Line \ref{line:guard} in Algorithm \ref{alg:etpg}) replaced with $V\left(\hat{z}_\Pi{(t)}\right) < V\left(\hat{z}_\Pi{(t-1)}\right)$. Hence, it follows from Theorem \ref{th:stability} that ``Event-Triggered Luenberger Update'' loop terminates. From the termination of the Luenberger update loop, we conclude that the increase caused by the time update (Proposition \ref{prop:timeupdate}) can always be compensated. Accordingly, using the same argument that was used in the proof of Theorem \ref{th:stability}, we conclude that $\lim_{t \rightarrow \infty} W\left(\zhat_\Pi{(t)}\right) = 0$ and hence the estimate converges to the desired value $z^*$.
\end{proof}

\section{Simulation Results}
\label{sec:results}

\subsection{Multiple Attacking Sequences}
In this example
we consider an Unmanned Ground Vehicle (UGV) under different types of sensor
attacks.  We assume that the UGV moves along straight lines and completely stops before
rotating. Under these assumptions, we can describe the dynamics of the UGV by:

\begin{align*}
	\matrix{\dot{x} \\ \dot{v} } &= \matrix{0 & 1  \\ 0 & \frac{-B}{M} } \matrix{x
	\\ v  } + \matrix{0  \\ \frac{1}{M}  } F,\\
	\matrix{\dot{\theta} \\ \dot{\omega} } &= \matrix{0 & 1 \\ 0 & \frac{-B_r}{J}}
	\matrix{\theta \\ \omega } + \matrix{0 \\ \frac{1}{J} } T,
\end{align*}
where $x,v,\theta,\omega$ are the states of the UGV corresponding to position,
linear velocity, angular position and angular velocity, respectively. The parameters $M,J,B,B_r$  denote the mechanical mass, inertia, translational 
friction coefficient and the rotational friction coefficient. The inputs to 
the UGV are the force $F$ and the torque $T$.  The UGV is equipped with a GPS sensor which measures the UGV position, two motor
encoders which measure the translational velocity and an inertial measurement unit which measures
both rotational velocity and rotational position. We assume that encoders perform the necessary processing to directly provide a velocity measurement and hence the resulting output equation can be written as:
\begin{align*}
y = \matrix{1 & 0 & 0 & 0 \\ 0 & 1 & 0 & 0 \\ 0 & 1 & 0 & 0\\ 0 & 0 & 1 & 0 \\ 0 & 0 & 0 & 1} \matrix{x \\ v \\ \theta \\ \omega }  + \matrix{\psi_1 \\ \psi_2 \\ \psi_3 \\ \psi_4 \\ \psi_5},
\end{align*}
where $\psi_i$ is the measurement noise of the $i$the sensor which is assumed to be gaussian with zero norm and finite covariance.

By applying Proposition \ref{lemma:rip} for different values of $s$, we conclude that the UGV can be resilient only to one attack on any of the two encoders. Attacking any other sensor precludes the system from being $2s$-sparse observable. 

Figure \ref{fig:performance_tank} shows the performance of the proposed algorithms under
different attacks on the UGV motor encoders. The attacker alternates between corrupting the left and the right encoder measurements as shown in Figure \ref{fig:tank_attack_right} and Figure \ref{fig:tank_attack_left}. Three different types of attacks are considered. First, the attacker
corrupts the sensor signal with random noise. The next attack consists of a step function followed by a ramp. Finally a replay-attack is mounted by replaying the previously measured
UGV velocity.

The UGV vehicle goal is to move $5$m along a straight line, stop and perform a $90^o$ rotation and repeat this pattern 3 times until it traces a square and returns to its original position and orientation. In Figures \ref{fig:tank_state_1_etpg} and \ref{fig:tank_state_2_etpg} we show the result of using the ETPG algorithm and the ETPL observer to reconstruct the state under sensor attacks. The reconstructed state is used by a linear feedback tracking controller forcing the UGV to track the desired square trajectory.

The reconstructed position and velocity are shown in Figures
\ref{fig:tank_state_1_etpg} and \ref{fig:tank_state_2_etpg}. These figures show that both algorithms are able to successfully reconstruct the state and hence the UGV is able to reach its goal despite the attacks. Moreover, we observe that the ETPL observer is less sensitive to noise compared to the ETPG. This follows from the fact the ETPL observer ``averages out'' the noise by using all the available sensor data as it becomes available.

Recall that the attack model in Section \ref{sec:problem} requires the set of attacked sensors to remain constant over time. However, Figure \ref{fig:performance_tank} shows the proposed algorithms correctly constructing the state even though this assumption is violated. This is due to the fact that the period during which only one sensor is attacked is sufficiently long compared with the time it takes for the algorithms to converge.

\begin{figure} 
\centering 
\subfigure[Reconstructed position versus ground truth. 
]{\label{fig:tank_state_1_etpg}
\resizebox{0.47\textwidth}{!}{
\includegraphics{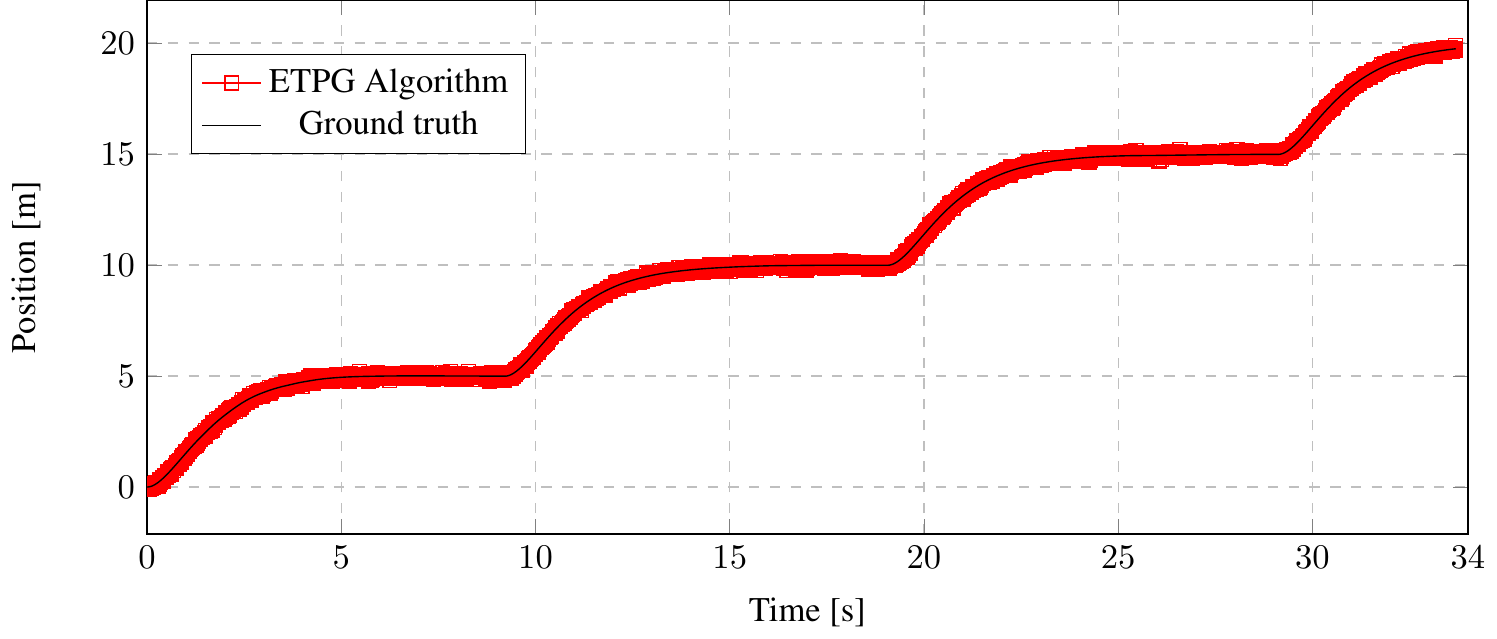}
		}
\resizebox{0.47\textwidth}{!}{
\includegraphics{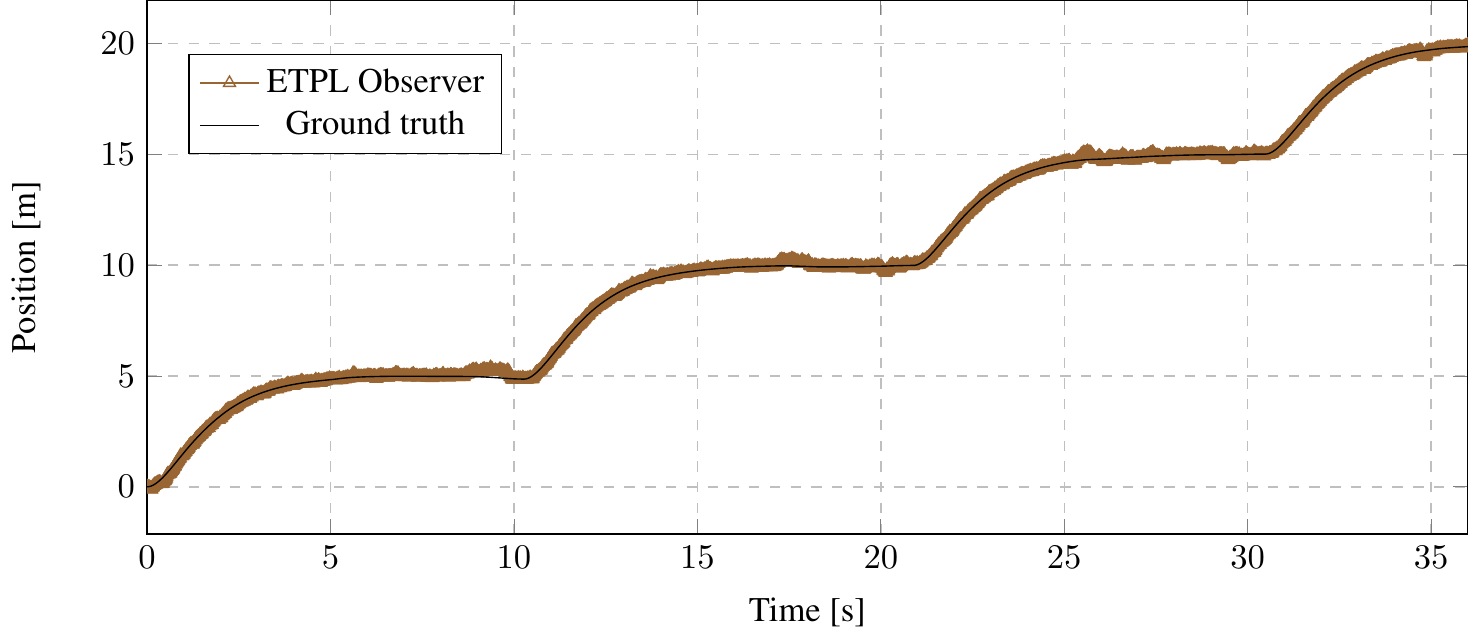}
		}
}\\
\subfigure[Reconstructed velocity versus ground truth. 
]{\label{fig:tank_state_2_etpg}
\resizebox{0.47\textwidth}{!}{
\includegraphics{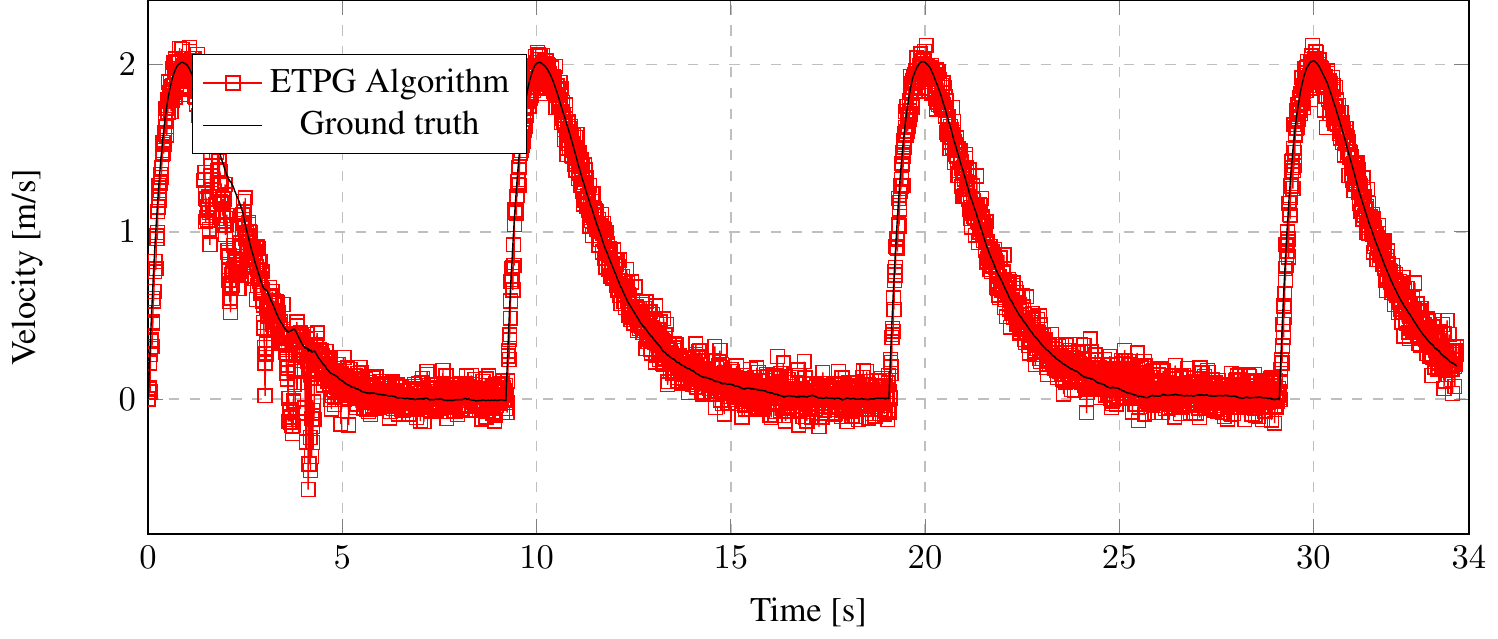}
		}
\resizebox{0.47\textwidth}{!}{
\includegraphics{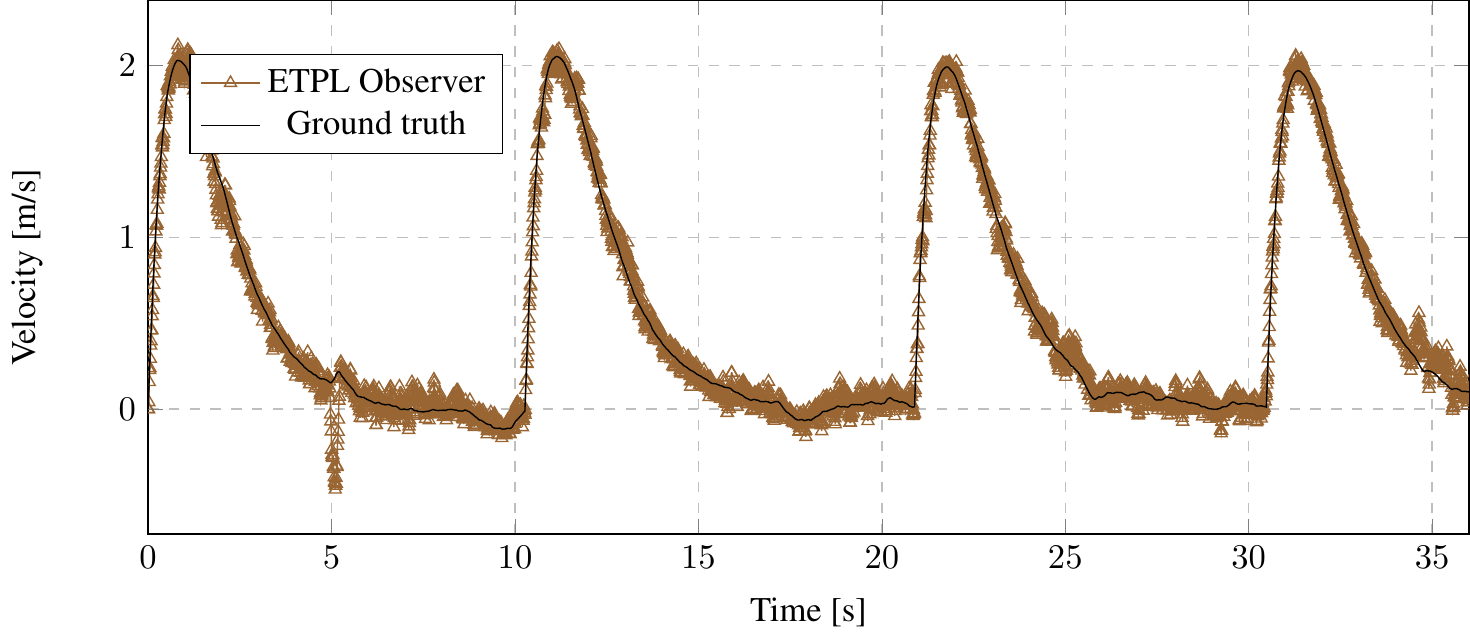}
		}
}\\
\subfigure[Reconstructed attack on left encoder versus ground truth.]
{\label{fig:tank_attack_left}
\resizebox{0.47\textwidth}{!}{
\includegraphics{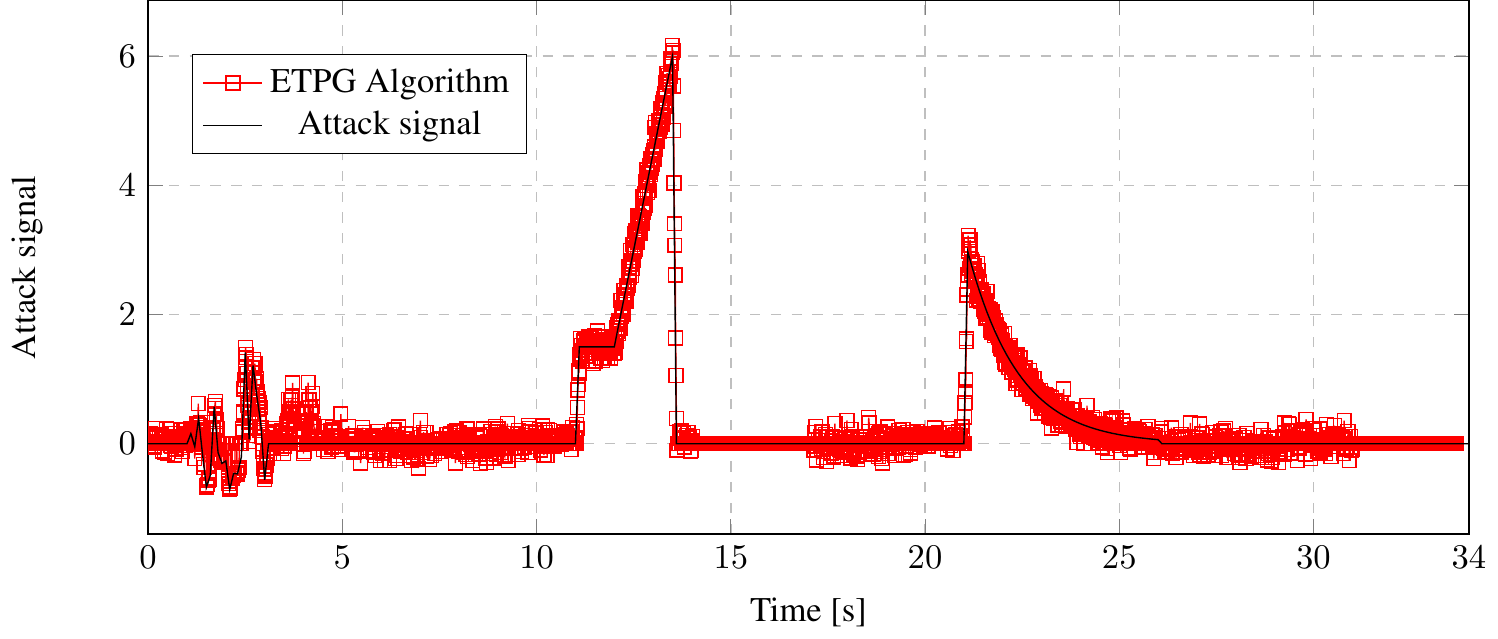}
		}
\resizebox{0.47\textwidth}{!}{
\includegraphics{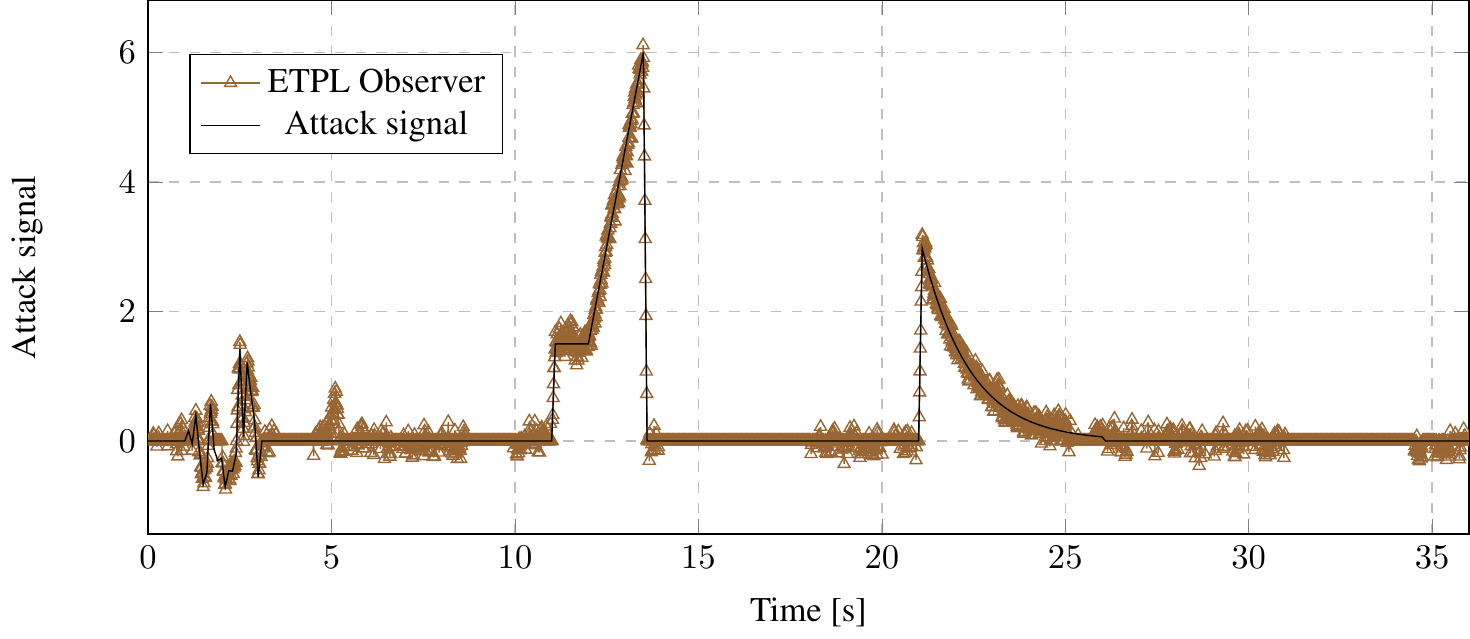}
		}
}\\
\subfigure[Reconstructed attack on right encoder versus ground truth.]
{\label{fig:tank_attack_right}
\resizebox{0.47\textwidth}{!}{
\includegraphics{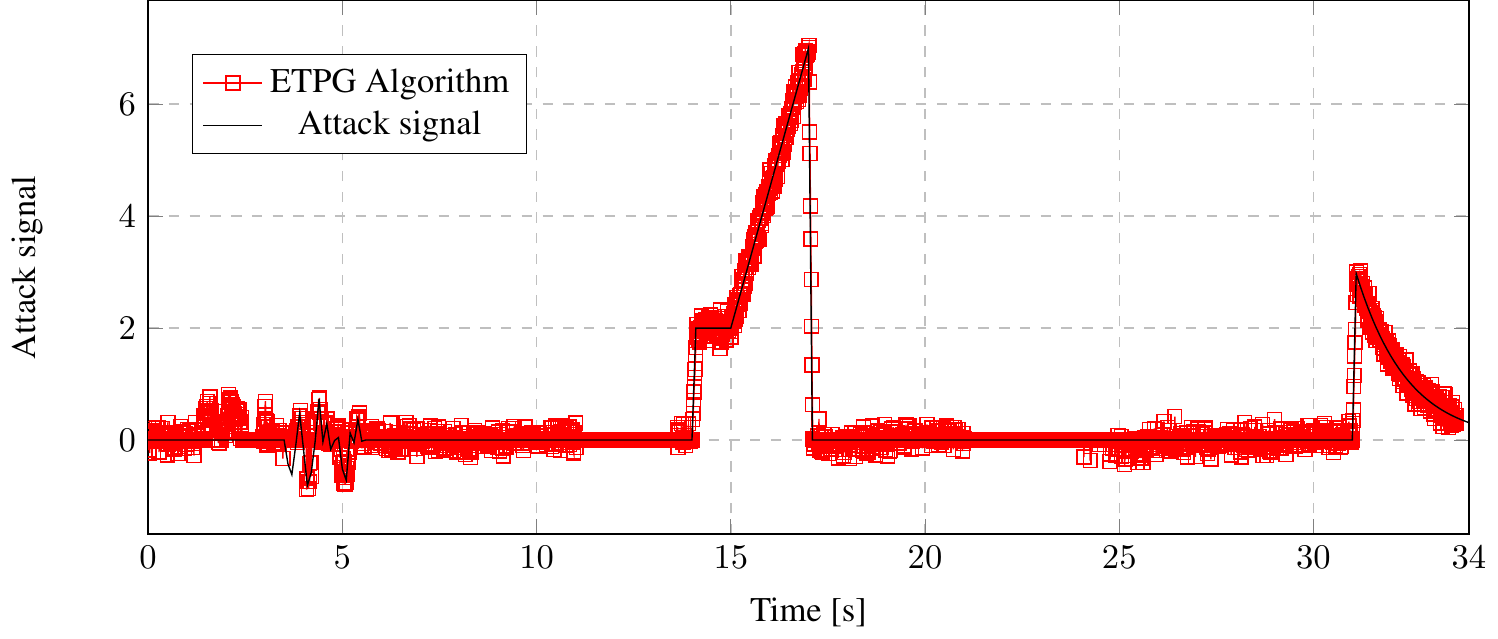}
		}
\resizebox{0.47\textwidth}{!}{
\includegraphics{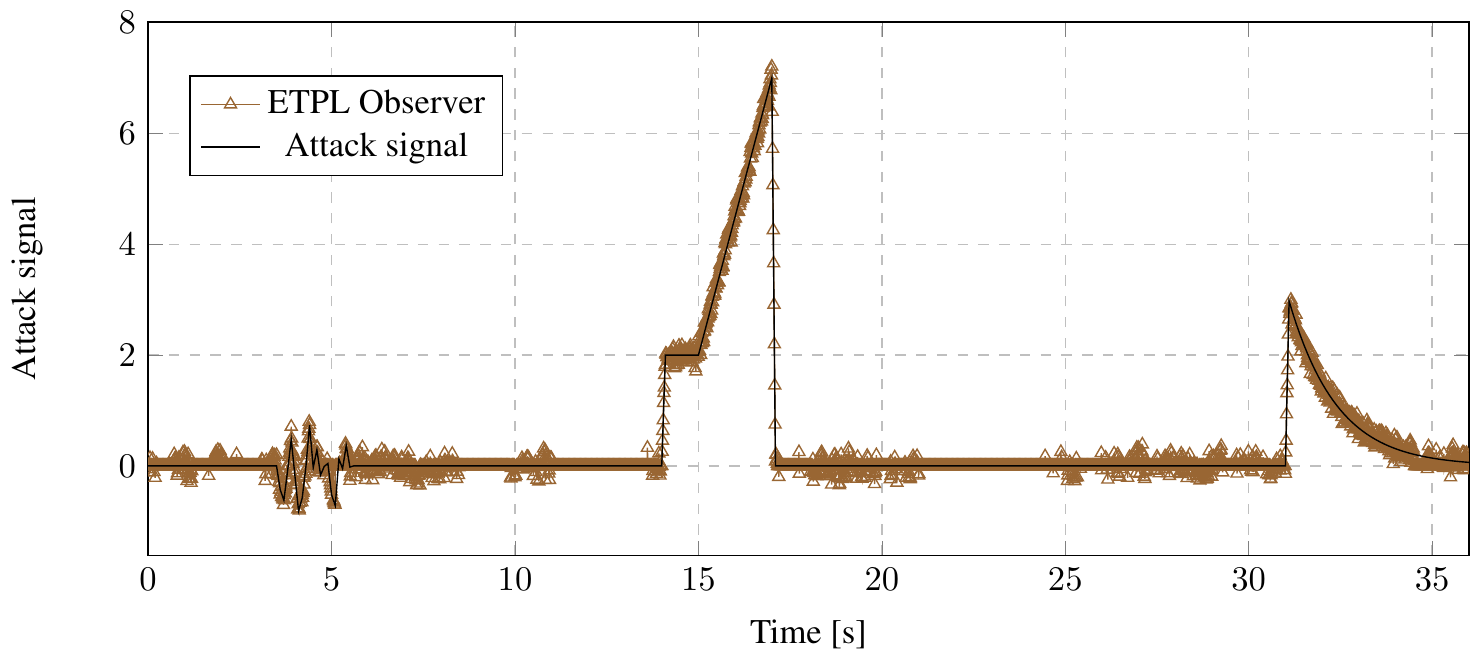}
	}
}
\caption{Performance of the UGV controller in the cases where no attack takes place versus the case where the attack signal is applied to the UGV encoders. The objective is to move $5$ m, stop and perform a $90^o$ rotation and repeat this pattern to follow a square path. The controller uses ETPG algorithm and ETPL Observer to reconstruct the UGV states. In both cases we show the linear position (top), linear velocity (middle), and the reconstruction of the attack signal (bottom).}
\label{fig:performance_tank}
\end{figure}

\subsection{Computational Performance}

We compare the efficiency of the proposed ETPG and ETPL
algorithms against the $L_1/L_r$ decoder introduced in \cite{HamzaAllerton}. To perform this
comparison, we randomly generated $100$ systems with $n = 20$ and $p = 25$ and 
simulated them against an increasing number of
attacked sensors ranging from $0$ to $12$. 
For each test case we generated a random support set for the attack vector, random attack signal and random initial conditions. 
Averaged results for the different numbers of attacked sensors are shown in Figure \ref{fig:time}. Although we claim no statistical significance, the results in Figure~\ref{fig:time} are characteristic of the many simulations performed by the authors.
The $L_1/L_r$ decoder is implemented using CVX while
the ETPG and ETPL algorithms are direct implementations of Algorithms
\ref{alg:etpg} and \ref{alg:etpl} in Matlab. The tests were performed on a desktop equipped with an Intel Core i7 processor operating at 3.4 GHz and 8 GBytes of memory.

Note that the ETPL observer, as required by any solution to Problem \ref{prob:observer}, is an asymptotic observer, i.e., the reconstructed state converges to the true state asymptotically. This should be contrasted with the ETPG algorithm where a single execution of Algorithm \ref{alg:etpg} is sufficient to construct an estimate which is  sufficiently close\footnote{Although the outer loop of Algorithm \ref{alg:etpg} requires $V\left(\hat{z}_\Pi^{(k-1)}\right)\le 0$ for termination, our implementation used instead $V\left(\hat{z}_\Pi^{(k-1)}\right)\le \epsilon$ with $\epsilon = 10^{-6}$.} to the system state. Hence, to compare the ETPG and ETPL algorithms we define the \emph{execution time} as the time needed by Algorithms \ref{alg:etpg} and \ref{alg:etpl} to terminate and the \emph{convergence time} as the time needed by each algorithm from the start of the execution until the estimate becomes $\epsilon$-close to the system state (with $\epsilon$ is set to $\epsilon = 10^{-6}$ in this example). Note that the execution time affects the choice of the sampling period when the algorithm is deployed while the convergence time reflects the performance of each of the algorithms.

In Figure~\ref{fig:execTime} we can appreciate how both the ETPG and the ETPL algorithms outperform the  $L_1/L_r$ decoder by an order of magnitude in execution
time. We also observe that the ETPG algorithm requires more execution time
compared to the ETPL observer. This follows from the existence of the outer-loop
in the ETPG algorithm which requires the Lyapunov function to reach zero before
termination. In  Figure \ref{fig:convgTime} we show the convergence time for each of the three algorithms. It follows from the nature of the $L_1/L_r$ decoder and the ETPG algorithm that the execution time and the convergence time are both equal. This is not the case for the ETPL observer which requires a longer convergence time compared to the ETPG algorithm. These two figures illustrate the tradeoff between execution timing and performance.
\begin{figure}
	\centering
	\subfigure[Execution time.]
	{\label{fig:execTime}
	\resizebox{0.47\textwidth}{!}{
		\begin{tikzpicture}
			\begin{axis}[
				width = 9cm,
				height = 5cm,
			    	xlabel=Number of attacked sensors $s$,		
			    	ylabel=Execution time {[s]},
			    	xmin = 1,
			    	xmax = 13,
			    	legend style={at={(0.7,0.7)},anchor=north,legend columns=1},
			    	ymajorgrids=true,
				xmajorgrids=true,
			    	grid style=dashed,	]
				\addplot table{time_cvx.txt};
				\addplot table{time_etpg.txt};
				\addplot[color={brown!70!black},mark=triangle] table{time_etpl.txt};
				\legend{$L_r/L_2$ Decoder,ETPG Algorithm, ETPL Observer}
			\end{axis}
		\end{tikzpicture}
		}
	}
	\subfigure[Convergence time.] 
	{\label{fig:convgTime}
\resizebox{0.47\textwidth}{!}{
		\begin{tikzpicture}
			\begin{axis}[
				width = 9cm,
				height = 5cm,
			    	xlabel=Number of attacked sensors $s$,		
			    	ylabel=Convergence time {[s]},
			    	xmin = 1,
			    	xmax = 13,
			    	legend style={at={(0.7,0.7)},anchor=north,legend columns=1},
			    	ymajorgrids=true,
				xmajorgrids=true,
			    	grid style=dashed,	]
				\addplot table{timeConvergence_cvx.txt};				
				\addplot table{timeConvergence_etpg.txt};
				\addplot[color={brown!70!black},mark=triangle] table{timeConvergence_etpl.txt};
				\legend{$L_r/L_2$ Decoder, ETPG Algorithm, ETPL Observer}
			\end{axis}
		\end{tikzpicture}
		}
	}
	 \caption{Timing analysis of the $L_1/L_2$ decoder versus the ETPG and ETPL
	 algorithms for different numbers of attacked sensors. Subfigure (a) shows the time required to execute each of the three algorithms.
	 Subfigure (b) shows the time required for the estimate, computed by each of the algorithms, to become $\epsilon$-close to the system state for $\epsilon=10^{-6}$.}
	\label{fig:time}	
\end{figure}
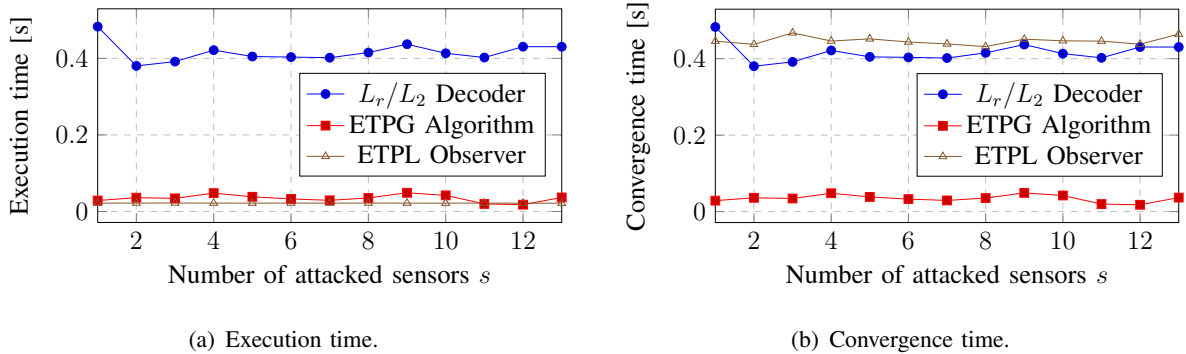

\section{Conclusions}
\label{sec:conclusion}
In this paper we considered the problem of designing computationally
efficient algorithms for state reconstruction under sparse adversarial attacks/noise.  We characterized the solvability of this problem by using the notion of sparse observability and proposed two algorithms for state reconstruction. To improve the timing performance of the proposed algorithms, we adopted an event-triggered approach that determines on-line how many gradient steps should be executed per projection on the set of constraints. These algorithms can be further improved along multiple directions such as dynamically adjusting the step size or using more refined gradient algorithms, such as conjugated gradient.

\bibliographystyle{IEEEtran}
\bibliography{bibliography2}

\end{document}